\newcommand{\newuparrow}{{{\rlap{$\ $}\hbox{$\uparrow$}}}}%
\newcommand{\newdownarrow}{{{\rlap{$\ $}\hbox{$\downarrow$}}}}%
 \newcommand{\tbigcup}{\mathop{\textstyle \bigcup}}%%
 \newcommand{\tbigvee}{\mathop{\textstyle \bigvee}}%%
 \newcommand{\tbigwedge}{\mathop{\textstyle \bigwedge}}%%
\newcommand{\set}[1]{\{\,#1\,\}}
\newcommand{\alg}[1]{\ensuremath{\mathfrak{#1}}} % Algebra
\newcommand{\cat}[1]{\ensuremath{\mathsf{#1}}} % Category
\theoremstyle{plain}
          \newtheorem{theorem}{Theorem}[section]
          \newtheorem{lemma}[theorem]{Lemma}
          \newtheorem{proposition}[theorem]{Proposition}
	\newtheorem{corollary}[theorem]{Corollary}
	\newtheorem{result}[theorem]{Result}
        \theoremstyle{definition}
          \newtheorem{example}[theorem]{Example}
          \newtheorem{examples}[theorem]{Examples}
          \newtheorem{remark}[theorem]{Remark}
\newtheorem{question}[theorem]{Question}
\newtheorem{definition}[theorem]{Definition}
\newtheorem{remarks}[theorem]{Remarks}
\newtheorem{comments}[theorem]{Comments}
\numberwithin{equation}{section}
\newcounter{num}[section]
\newcommand{\donotbreakdash}[1]{#1\nobreakdash-\hspace{0pt}}
\tikzset{every picture/.style={line width=0.7pt}}
\newcommand{\mycircle}[1]
         {\draw[fill=white,line width=0.7pt] (#1) circle[radius=2pt]}
\begin{document}
\title{Unitally nondistributive quantales} %%%%%%%%%%%%
\author[J. Guti\'errez Garc{\'\i}a]{Javier Guti\'errez Garc{\'\i}a}
\thanks{The first named author acknowledges support from the Basque Government (grant IT1483-22). }
\date{\today}
\address{Departamento de Matem\'aticas, Universidad del Pa\'{\i}s Vasco (UPV/EHU), 48080, Bilbao, SPAIN}
\email{javier.gutierrezgarcia@ehu.eus}
\author[U. H\"ohle]{Ulrich H\"ohle}
\date{\today}
\address{Fakult\"{a}t f\"{u}r Mathematik und Naturwissenchaften, Bergische Universit\"{a}t, D-42097, Wuppertal, GERMANY}
\email{uhoehle@uni-wuppertal.de}
\keywords{Unitally nondistributive quantale, nondistributive lattice, quantale extension}
\maketitle

{\let\thefootnote\relax
\footnotetext{MSC2020: Primary 06F07 Secondary 06A06, 06B05.} }%%%%%%%%%%

\begin{abstract}
Unitally nondistributive quantales are unital quantales such that the unit is approximable by the totally below relation and does not meet-distribute over arbitrary joins. It is shown that the underlying nondistributive complete lattice contains at least $7$ elements.
Moreover, under mild conditions, every quantale has an extension to a unitally nondistributive quantale by the addition of an isolated unit. As a byproduct of this construction we prove that there exist $30$ non-isomorphic, unitally nondistributive quantales on the set of $7$ elements.
\smallskip

\noindent \textbf{Keywords.} Unitally nondistributive quantale, nondistributive lattice, quantale extension
%\keywords{Unitally nondistributive quantale, nondistributive lattice, quantale extension}
\end{abstract} %%%%%%%%%
\bigskip

\section*{Introduction} 
In this paper we present a new class of quantales arising from a question posed by Dirk~Hofmann and Maria Manuel Clementino during the XIV Portuguese Category Seminar celebrated at Coimbra, Portugal in  October 2023 (see \cite{CHT}). The essence of this question can be formulated as follows. Let $\alg{Q}$ be a unital quantale. What impact has the approximability of the unit of $\alg{Q}$ by the totally below relation on the distributivity of the underlying lattice of $\alg{Q}$. In this context it is important to point out that under the existence of a dualizing element and the distributivity of the quantale multiplication over nonempty meets  the approximability of the unit of $\alg{Q}$ by the totally below relation is equivalent to the completely distributivity of the underling lattice (cf.\ Proposition~\ref{propB}). Therefore the question arises what happens in the absence of a dualizing element or the missing distributivity of the quantale multiplication over nonempty meets.

The aim of this paper is to show that the approximability of the unit of a quantale by the totally below relation is completely unrelated to any kind of distributivity of the underlying lattice. For this purpose we introduce the class of unitally nondistributive quantales. 
As a first step we investigate their underlying complete  lattice and encounter a new class of nondistributive lattices, in which the approximability of the unit by the totally below relation entails a stronger form of nondistributivity being called here strict nondistributivity (cf.\ Definition~\ref{definition1}). 
The investigation of this notion leads us to the first important result  (cf.\ Proposition~\ref{newthm1}), 
which provides a necessary condition for a complete lattice 
to be strictly nondistributive in terms of $4$ specific complete sublattices. %, and certainly resembles Birkhoff's characterization of nondistributive lattices.
  Further, every nondistributive complete  lattice has an extension to a strictly nondistributive one (cf.\ Lemma~\ref{newnewlemma1.1C}). 
In a next step we present the second important result of this paper  (cf.\ Theorem~\ref{newthm2}), a construction showing how, under mild conditions, quantales on nondistributives lattices can be extended to unitally nondistributive quantales by addition of an isolated unit.
 On this background, it is not difficult to see that every unitally nondistributive quantale contains at least $7$ elements (cf. Corollary~\ref{newcor1}). This is the reason why we provide the classification of all unitally nondistributive quantales on the set of $7$ elements and obtain that on the set of $7$ elements there exist exactly $30$ non-isomorphic, unitally nondistributive quantales (cf. Corollary~\ref{newcor2} and Examples \ref{example2} and \ref{example3}). All of them could be seen as a simple counterexample concerning the question motivating our study (cf.\ Subsection~\ref{subsec:3.1}). Two classes of  some more advanced counterexamples dealing with at least $8$ elements appear in Subsection~\ref{subsec:3.2} completing the picture given by Proposition~\ref{newthm1}.

\section{Preliminaries on quantales and motivation}
\label{sec0:}

Let $\cat{Sup}$ be the symmetric, monoidal closed category of complete lattices and join-preserving maps. A \emph{quantale} is a semigroup in $\cat{Sup}$, a \emph{unital} quantale is a monoid in $\cat{Sup}$, and an \emph{integral} quantale is a unital quantale such that the unit coincides with the upper universal bound $\top$ (cf.\ \cite{EGHK}). Due to the tensor product in $\cat{Sup}$, every quantale $\alg{Q}$ can be identified with a complete lattice provided with a semigroup operation $\ast$ which is join-preserving in each variable separately, i.e.\ $\bigl(\tbigvee A\bigr)\ast \beta=\tbigvee_{\alpha\in A} \alpha\ast \beta$ and $\beta\ast \bigl(\tbigvee A\bigr)=\tbigvee_{\alpha\in A}\beta\ast \alpha, $ for each $A\subseteq \alg{Q}$ and $\beta\in \alg{Q}$.
The right- and  left-implication are given by 
\[\alpha\searrow \beta=\tbigvee\set{\gamma\in \alg{Q}\mid\alpha\ast \gamma\le \beta}\quad \text{and}\quad \beta\swarrow \alpha=\tbigvee\set{\gamma\in \alg{Q}\mid \gamma\ast \alpha\le \beta},\qquad \alpha,\beta\in \alg{Q}.\]
Let $\alg{Q}$ be a quantale.
An element $\delta\in \alg{Q}$ is \emph{dualizing} if for all $\alpha\in \alg{Q}$ the relation ${\delta\swarrow(\alpha\searrow\delta)}=\alpha={(\delta\swarrow\alpha)}\searrow\delta$ holds. If $\alg{Q}$ has a dualizing element, then it is unital. $\alg{Q}$  is \emph{semi-unital} if $\alpha\le \top\ast \alpha$ and $\alpha\le \alpha\ast \top$ hold for all $\alpha\in \alg{Q}$. An element $\alpha\in \alg{Q}$ is \emph{two-sided} if $\top\ast \alpha\le \alpha$ and $\alpha\ast \top\le \alpha$, and $\alg{Q}$ is \emph{two-sided} if every element of $\alg{Q}$ is two-sided. Every integral quantale is two-sided, but not vice versa.

Let $L$ be a complete lattice and $\vartriangleleft$ be the totally below relation on $L$, i.e.\ given $\alpha,\beta\in L$, we write $\beta\vartriangleleft \alpha$ if for any subset $A\subseteq L$ with $\alpha\leq \tbigvee A$ there is a $\gamma\in A$ such that $\beta\le\gamma$. We recall that a complete lattice is \emph{completely distributive} if every $\alpha\in L\setminus\set{\bot}$ is \emph{\donotbreakdash{$\vartriangleleft$}approximable}, i.e. $\alpha\le \tbigvee\set{\beta\in L\mid \beta \vartriangleleft \alpha}$.
Note that if $\alpha\vartriangleleft \alpha$, i.e.\ $\alpha$ is \emph{completely join-prime} (cf.\ \cite{DP}), then $\alpha$ is \donotbreakdash{$\vartriangleleft$}approximable.

Before we proceed we first show that under certain algebraic properties of a  unital quantale the \donotbreakdash{$\vartriangleleft$}approximability of its unit implies the complete distributivity of its underlying complete lattice.

\begin{proposition}\label{propB} Let $\alg{Q}=(\alg{Q},\ast,e)$
be a unital quantale with a dualizing element such that the
quantale multiplication is distributive over nonempty meets. If the unit $e$ is \donotbreakdash{$\vartriangleleft$}approximable, then the underlying lattice of $\alg{Q}$ is completely
distributive.
\end{proposition}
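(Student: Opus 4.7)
The plan is to show that every nonzero $\alpha \in \alg{Q}$ is $\vartriangleleft$-approximable, which by the definition recalled above amounts to the complete distributivity of $\alg{Q}$. First I would decompose $\alpha$ through the unit: the hypothesis gives $e \le \tbigvee\set{\beta \mid \beta \vartriangleleft e}$, and since $\alpha = e \ast \alpha$ and $\ast$ preserves joins in each variable,
\[\alpha \;=\; \tbigvee\set{\beta \ast \alpha \mid \beta \vartriangleleft e}.\]
Thus it suffices to establish the implication $\beta \vartriangleleft e \;\Rightarrow\; \beta \ast \alpha \vartriangleleft \alpha$, for then this display exhibits $\alpha$ as a join of elements totally below it.

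To handle that implication, fix $\beta \vartriangleleft e$ and a subset $A \subseteq \alg{Q}$ with $\alpha \le \tbigvee A$; since $\alpha \ne \bot$, $A$ must be nonempty. I need to locate $a \in A$ with $\beta \ast \alpha \le a$, equivalently with $\beta \le a \swarrow \alpha$, and the plan is to reduce this to the hypothesis $\beta \vartriangleleft e$ by verifying the intermediate estimate
\[e \;\le\; \tbigvee_{a \in A}\,(a \swarrow \alpha).\]
Once this is in hand, $\beta \vartriangleleft e$ applied to the family $\set{a \swarrow \alpha \mid a \in A}$ immediately delivers the required~$a$.

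The real content of the argument is this intermediate estimate, and here both algebraic hypotheses enter. Writing $\rho(x) = x \searrow \delta$ and $\lambda(x) = \delta \swarrow x$, the dualizing property says that $\rho$ and $\lambda$ are mutually inverse order-reversing bijections, from which one reads off $\rho(e) = \delta$ and hence $\lambda(\delta) = e$; a short adjointness computation also yields the key identity $a \swarrow \alpha = \lambda(\alpha \ast \rho(a))$. Since $\lambda$ sends meets to joins and $\ast$ distributes over nonempty meets,
\[\tbigvee_{a \in A}\,(a \swarrow \alpha) \;=\; \lambda\Bigl(\tbigwedge_{a \in A}\alpha \ast \rho(a)\Bigr) \;=\; \lambda\bigl(\alpha \ast \rho(\tbigvee A)\bigr),\]
and the assumption $\alpha \le \tbigvee A$ forces $\alpha \ast \rho(\tbigvee A) \le (\tbigvee A) \ast \rho(\tbigvee A) \le \delta$; applying the order-reversing $\lambda$ then gives $\lambda(\alpha \ast \rho(\tbigvee A)) \ge \lambda(\delta) = e$, closing the loop.

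I expect the main obstacle to be the recognition that the bridge between approximability of the unit and approximability of an arbitrary $\alpha$ must be transported through the dualizing duality: both the identity $a \swarrow \alpha = \lambda(\alpha \ast \rho(a))$ and the collapse $\tbigwedge_a (\alpha \ast \rho(a)) = \alpha \ast \rho(\tbigvee A)$ are indispensable, and without either the dualizing element or the distributivity of $\ast$ over nonempty meets the passage from the unit to a general $\alpha$ does not close---this is precisely the point the subsequent sections of the paper exploit when they relax these hypotheses.
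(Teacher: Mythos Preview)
Your argument is correct and follows essentially the same route as the paper's proof: both establish $e\le\tbigvee_{a\in A}(\text{implication})$ via the dualizing duality and the distributivity of $\ast$ over nonempty meets, then apply $\beta\vartriangleleft e$. The only cosmetic differences are that the paper works with $\alpha\searrow\gamma$ (multiplying by $\alpha$ on the left, yielding $\alpha\ast\beta\vartriangleleft\alpha$) whereas you use $a\swarrow\alpha$ (multiplying on the right, yielding $\beta\ast\alpha\vartriangleleft\alpha$), and the paper packages the key identity into a single chain of equalities starting from the obvious bound $e\le\alpha\searrow(\tbigvee A)$ rather than naming the maps $\rho,\lambda$ explicitly.
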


\begin{proof}
Let $\alpha\in\alg{Q}\setminus\set{\bot}$, $\delta$ be a dualizing element of $\alg{Q}$ and $e$ be the unit of $\alg{Q}$. Further, let $\beta\in\alg{Q}$ such that $\beta \vartriangleleft e$ and $\varnothing\ne A\subseteq \alg{Q}$ with $\alpha \le \tbigvee A$. 
 If the quantale multiplication distributes over nonempty
meets, then
\begin{align*}
\beta\vartriangleleft e&\le\alpha\searrow \bigl(\tbigvee A\bigr)
=\alpha\searrow\bigl(
\bigl(\delta\swarrow\bigl(\tbigvee A\bigr)\bigr) \searrow
\delta\bigr)=\alpha\searrow\bigl(
\bigl(\tbigwedge\limits_{\gamma\in
A}\bigl(\delta\swarrow\gamma\bigr)\bigr) \searrow \delta\bigr)\\[-2pt]
&=
\bigl(\bigl(\tbigwedge_{\gamma\in A} (\delta\swarrow
\gamma)\bigr)\ast \alpha\bigr)\searrow\delta
=\tbigvee_{\gamma\in A} \bigl(\bigl((\delta\swarrow
\gamma)\ast \alpha\bigr)\searrow \delta\bigr)
 = \tbigvee_{\gamma\in A} (\alpha\searrow \gamma).
\end{align*}
Hence there exists $\gamma\in A$ such that $\beta \le \alpha \searrow \gamma$, i.e.\
$\alpha \ast \beta \le \gamma$ and it follows that $(\alpha\ast\beta)\vartriangleleft \alpha$. 
Since $e$ is  \donotbreakdash{$\vartriangleleft$}approximable, we obtain
\[\alpha\le\alpha\ast\bigl(\tbigvee\set{\beta\in \alg{Q}\mid \beta \vartriangleleft e}\bigr)= \tbigvee\set{\alpha\ast\beta\mid \beta\in
\alg{Q}\text{ and }\beta \vartriangleleft e}\le
\tbigvee\set{\gamma\in\alg{Q}\mid \gamma \vartriangleleft
\alpha}\]
We conclude that $\alpha$ is \donotbreakdash{$\vartriangleleft$}approximable and consequently  the underlying lattice of $\alg{Q}$ is completely
distributive.
\end{proof}

If in a unital quantale we abandon the existence of a dualizing element 
or the property that
the quantale multiplication is distributive over nonempty meets,
then Proposition~\ref{propB} suggests the question which
relationship exists between the \donotbreakdash{$\vartriangleleft$}approximability of the the unit $e$ and
distributivity properties of the underlying lattice. As we already pointed out in the introduction, this question arose also during the XIV Portuguese Category Seminar celebrated at Coimbra, Portugal in  October 2023. More precisely, Dirk Hofmann and Maria Manuel Clementino asked the following:

\begin{question}\label{question1}
If the unit of a unital quantale  is \donotbreakdash{$\vartriangleleft$}approximable, does this property imply
%is it true 
that  then the unit \donotbreakdash{$\wedge$}distributes over arbitrary joins?
\end{question}
 
The motivation of this article is to answer this question negatively by proving that there exist a large class of unital quantales $\alg{Q}=(\alg{Q},\ast,e)$ such that the unit $e$ is
\donotbreakdash{$\vartriangleleft$}approximable and there exists a subset $A$ of $\alg{Q}$ satisfying the property $e\wedge\bigl(\tbigvee A\bigr)\not\le \tbigvee_{\alpha\in A}
(e\wedge \alpha)$.
We shall call these quantales \emph{unitally nondistributive}.
It follows immediately that every unitally nondistributive quantale is non-integral and its
underlying lattice is far from being a frame. However, it is interesting to see that every
quantale induced by an arbitrary group can be extended to a
unitally nondistributive quantale (cf. Example~\ref{examples1}\,(2)).

%%%%%%%%%%%%%%%%%%%%%%%%%%%%%%%%%%%%%%%%%%%%%%%%%%%%%%%%%%%%%%%%%%%%%%%%%%%%%%%%%%%%%%%%%%%%%%%%%%%%%%%%%%%%%%%%%%%%%%%%%%%%%%%%%%
%%%%%%%%%%%%%%%%%%%%%%%%%%%%%%%

\section{Strictly nondistributive lattices}
 \label{sec:1}
First we recall some terminology from lattice theory. Let $L$ be
a lattice and $\alpha,\beta,\gamma\in L$. $\alpha$ is \emph{join-irreducible} if $\alpha\ne 0$ and $\alpha = \beta\vee\gamma$ implies
that $\alpha = \beta$ or $\alpha =\gamma$, and $\alpha$ and $\beta$ are
\emph{incomparable}, if $\alpha\not\le \beta$ and $\beta\not\le
\alpha$. 
We will also use the following notation: 
\[\newdownarrow\alpha=\set{\beta\in L\mid\beta\le
\alpha}\quad\text{ and }\quad\newuparrow\alpha={\set{\beta\in
L\mid\alpha\le \beta}}.\]
An element $\alpha\in
L\setminus\{\bot,\top\}$ is \emph{isolated}\footnote{The property being isolated means also to be doubly
(completely) irreducible.} in $L$ if there
exist two elements $\alpha^-\le\alpha$ and $\alpha^+\ge\alpha$ of $L$ such that
following properties hold:
\[(\newdownarrow\alpha)\setminus\set{\alpha}=%\set{\gamma\in L\mid\gamma\le \alpha,\  \alpha\neq\gamma}=
\newdownarrow\alpha^-\text{ and}\quad (\newuparrow\alpha)\setminus\set{\alpha}=%\set{\gamma\in L\mid \alpha \le \gamma,\ \alpha \neq\gamma}=
\newuparrow\alpha^+.\]
It is easy to see that monomorphisms in the category $\cat{Sup}$ are join-preserving and injective maps. 

Motivated by the concept of quantales (cf.\ Section~\ref{sec0:}) the categorical framework in this paper will be always $\cat{Sup}$. Hence a \emph{complete sublattice} $S$ of a complete lattice $L$ is a complete lattice $S$ such that the inclusion map  $S \xhookrightarrow{\,\,\,} L$ is  join-preserving. Therefore joins in $S$ are joins in the sense of $L$, but not meets.

In what follows we are interested in nondistributive lattices. Recall that a  lattice $L$ is \emph{nondistributive} if  and only if  there exist three elements $\alpha,\beta,\gamma\in L$ such that 
 \stepcounter{num}
\begin{equation}\label{nondistributive}\gamma\wedge (\alpha\vee \beta)\not\le
(\gamma\wedge \alpha)\vee (\gamma\wedge \beta).
\end{equation} 
It is well know that without loss of generality we can always choose $\gamma$ in (\ref{nondistributive}) satisfying the property $\gamma\le \alpha\vee \beta$. But then (\ref{nondistributive}) implies that $\gamma$ fails to be \donotbreakdash{$\vartriangleleft$}approximable. Typical examples of nondistributive lattices provided with this structure are the diamond with three atoms or the pentagon (see $\mathsf{M}_3$ and $\mathsf{N}_5$ in Figure~\ref{fig:N5 and M3}). 
\pagebreak

Since our main interest is to provide an answer to Question~\ref{question1} and to investigate nondistributive, complete lattices, in which certain elements are  \donotbreakdash{$\vartriangleleft$}approximable, we introduce the following terminology. 

\begin{definition}\label{definition1}
A  lattice $L$ is \emph{strictly nondistributive} if  there exist $\alpha,\beta,x\in L$ such that 
 \stepcounter{num}
\begin{equation}\label{strictly nondistributive}x\wedge (\alpha\vee \beta)\not\le
(x\wedge \alpha)\vee (x\wedge \beta)\quad\text{and}\quad x\not\le \alpha\vee \beta.
\end{equation} 
\end{definition}

As we have seen above, not every nondistributive lattice is strictly nondistributive.
 In fact, if  in $\mathsf{M}_3$ or $\mathsf{N}_5$ we choose $\alpha,\beta,\gamma\in L$ satisfying (\ref{nondistributive}), 
then $\alpha\vee \beta$ coincides necessarily with the universal upper bound.  But  we will show now that every nondistributive lattice can be \emph{extended} to a strictly nondistributive lattice.
  This construction will be  based on  the addition of an isolated element and will play a significant role in what follows. 
Due to the lattice structure this isolated element $x$ will always be completely join-prime, i.e. $x\vartriangleleft x$.
For this purpose we  first fix some further terminology.

Let us recall  that we are working in $\cat{Sup}$.  
Then a complete sublattice $S$ of
a complete lattice $L$ is called a \emph{pentagon}, respectively a \emph{diamond with $3$ atoms},
 if $S$ is isomorphic to  $\mathsf{N}_5$, respectively to $\mathsf{M}_3$. 
They are \emph{typical} and 
\emph{minimal} among nondistributive lattices in the sense
of their cardinality. 
Consequently any nondistributive lattice must have at least $5$ elements.
We are also interested in another two nondistributive lattices with $6$ and $7$ elements which we denote by $\mathsf{L}_6$ and $\mathsf{L}_7$, respectively,  and whose Hasse diagrams are given in Figure~\ref{fig:L6 and L7} below. 

A complete lattice $M$ is an \emph{extension} of a complete lattice $L$ if the inclusion map
$L\xhookrightarrow{} M$ is arbitrary join-preserving  and nonempty meet-preserving. In particular $L$ is always a complete sublattice of $M$. 

The next lemma describes a special extension of a complete lattice by two different elements, one of them being isolated. 
In particular, this extension will always transfer a nondistributive lattice into a \emph{strictly} nondistributive one. 
Note also that since we are working in $\cat{Sup}$, we assume here completeness, but it is evident that the basis of this lemma  requires only arbitrary lattices.

\begin{lemma}\label{newnewlemma1.1C} Let $L$ be a complete  lattice 
%\footnote{Since we are working in $\cat{Sup}$, we assume here completeness. It is evident that the basis of this lemma  requires only  arbitrary lattices.} 
and $\overline{\top}$ and $x$ be two different elements satisfying the condition $L\cap\set{\overline{\top},x}=\varnothing$. Then for every $\gamma \in L$ there exists a unique complete lattice-structure on  $\overline{L}^\gamma=L\cup\set{\overline{\top},x}$ satisfying the following conditions\textup:
\begin{enumerate}[label=\textup{(\roman*)},align=right,leftmargin=0pt,labelwidth=15pt,itemindent=25pt,labelsep=5pt,topsep=5pt,itemsep=3pt]
\item \label{(i)} $\overline{L}^\gamma$ is an extension of $L$.
\item \label{(ii)} The element $\overline{\top}$ is the universal upper bound of  $\overline{L}^\gamma$.
\item \label{(iii)} The element $x$ is isolated and satisfies the conditions $x^-=\gamma$ and $x^+=\overline{\top}$.
\end{enumerate}
Moreover, if there exist $\alpha,\beta\in L$ such that $\set{\alpha,\beta,\gamma}$ satisfies {\rm (\ref{nondistributive})}, then  $\set{\alpha,\beta,x}$ satisfies {\rm (\ref{strictly nondistributive})} in $\overline{L}^\gamma$, i.e.\ $\overline{L}^\gamma$ is strictly nondistributive.
\end{lemma}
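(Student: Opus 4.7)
The plan is to define the order on $\overline{L}^\gamma$ by hand, derive joins and meets from it, and then check that all three conditions (i)-(iii) are forced. I would first extend the order of $L$ by declaring $\overline{\top}$ to be above every element; $\delta\le x$ for every $\delta\in L$ with $\delta\le \gamma$; $x\le \overline{\top}$; and $x$ incomparable with every element of $L\setminus\newdownarrow\gamma$. Reflexivity, antisymmetry and transitivity then follow at once from the corresponding properties of $L$.

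For $A\subseteq \overline{L}^\gamma$, writing $A'=A\setminus\set{\overline{\top}}$ and $B=A'\setminus\set{x}$, I would then verify that the join in $\overline{L}^\gamma$ equals $\overline{\top}$ when $\overline{\top}\in A$, or when $x\in A$ together with $\tbigvee B\not\le\gamma$ in $L$; equals $x$ when $x\in A$, $\overline{\top}\notin A$ and $\tbigvee B\le\gamma$ in $L$; and equals $\tbigvee B$ (computed in $L$) otherwise. Dually, the meet in $\overline{L}^\gamma$ equals $\gamma\wedge\tbigwedge B$ (computed in $L$) when $x\in A'$ and $B\ne\varnothing$; equals $x$ when $A'=\set{x}$; and equals $\tbigwedge A'$ in $L$ (or $\overline{\top}$ when $A'=\varnothing$) otherwise. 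Checking that these prescriptions are genuine joins and meets is routine case analysis. From these formulas one reads off that the inclusion $L\hookrightarrow\overline{L}^\gamma$ preserves arbitrary joins and meets, which yields (i); conditions (ii) and (iii) hold by construction.

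For uniqueness, any complete-lattice structure on $\overline{L}^\gamma$ satisfying (i)-(iii) induces the same order: (ii) fixes $\overline{\top}$ as the top, (iii) forces $\newdownarrow x=\newdownarrow\gamma\cup\set{x}$ and $\newuparrow x=\set{x,\overline{\top}}$, and (i) leaves the order on $L$ unchanged. These specifications determine every comparability in $\overline{L}^\gamma$, and hence the complete lattice structure. For the final assertion, the meet formula gives $x\wedge\delta=\gamma\wedge\delta$ in $L$ for every $\delta\in L$. Applying this with $\delta=\alpha\vee\beta$, $\delta=\alpha$ and $\delta=\beta$ rewrites both sides of the desired inequality and reduces it verbatim to {\rm (\ref{nondistributive})} for $\set{\alpha,\beta,\gamma}$; and $x\not\le\alpha\vee\beta$ holds automatically, since $\alpha\vee\beta\in L$ while the only upper bounds of $x$ in $\overline{L}^\gamma$ are $x$ and $\overline{\top}$.

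I expect the main obstacle to be the bookkeeping in the join and meet case analysis; once that is in place, the extension property, the uniqueness argument, and the strict nondistributivity conclusion all follow by direct computation.
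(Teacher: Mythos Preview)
Your proposal is correct and follows essentially the same approach as the paper: both define the order by declaring $\overline{\top}$ the top and letting $x$ sit above $\newdownarrow\gamma$ and below only $\overline{\top}$, then deduce the lattice operations and the strict nondistributivity from the identity $x\wedge\delta=\gamma\wedge\delta$ for $\delta\in L$. The only difference is cosmetic---you spell out arbitrary joins and meets via full case analysis, whereas the paper records just the binary identities (vi)--(viii) and leaves the routine verification implicit.
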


\begin{proof} (Uniqueness) With regard to \ref{(i)} and \ref{(ii)} it is sufficient to investigate the order relation between the element $x$ and all other elements of $L$. It is easy to see that \ref{(iii)} is equivalent to the following properties in  $\overline{L}^\gamma$:
\begin{enumerate}[label=\textup{(\roman*)},align=right,leftmargin=0pt,labelwidth=15pt,itemindent=25pt,labelsep=5pt,topsep=5pt,itemsep=3pt,start=4]
\item \label{(iv)} if $\alpha\le \gamma$ in $L$, then $\alpha\le x$ in $\overline{L}^\gamma$,
\item \label{(v)} if $\alpha \not\le \gamma$  in $L$, then $\alpha$  is incomparable with $x$ in $\overline{L}^\gamma$.
\end{enumerate}

\noindent 
(Existence)  Obviously, the partial order on $L$  can be extended to $\overline{L}^\gamma$ by using the relations \ref{(iv)} and \ref{(v)} and fixing $\overline{\top}$ as universal upper bound.  Then \ref{(i)} --- \ref{(iii)} are satisfied. In order to confirm the completeness of $\overline{L}^\gamma$ it is sufficient to observe that for all $\alpha\in L$ the following relations hold:
{\begin{enumerate}[label=\textup{(\roman*)},align=right,leftmargin=0pt,labelwidth=15pt,itemindent=25pt,labelsep=5pt,topsep=5pt,itemsep=3pt,start=6]
\item \label{(vi)} $x\wedge \alpha=\gamma\wedge \alpha$, 
\item \label{(vii)} if $\alpha\le \gamma$ in $L$ then $\alpha\vee x=x$,
\item \label{(viii)} if $\alpha \not\le \gamma$  in $L$ then $\alpha\vee x=\overline{\top}$.  %\hfill\qedhere
\end{enumerate}}

Finally, if $\set{\alpha,\beta,\gamma}$ satisfies (\ref{nondistributive}) in $L$, 
then $\alpha\vee \beta\not\le \gamma$, and consequently ${\alpha\vee \beta}$ and $x$ are incomparable. Further, referring to $x^-=\gamma$, we observe $({\alpha\vee \beta})\wedge \gamma={({\alpha\vee \beta})\wedge x}$, $\alpha\wedge \gamma=\alpha\wedge x$ and $\beta\wedge \gamma=\beta\wedge x$. Hence %, if we replace $\gamma$ by $x$, then 
$\set{\alpha,\beta,x}$ satisfies (\ref{strictly nondistributive}) in $\overline{L}^\gamma$, i.e.\ $\overline{L}^\gamma$ is strictly nondistributive.
\end{proof}

\begin{remarks}\label{newremarks1} Referring to Lemma~\ref{newnewlemma1.1C} it is worthwhile to note the following:
\begin{enumerate}[label=\textup{(\arabic*)},align=right,leftmargin=0pt,labelwidth=10pt,itemindent=20pt,labelsep=5pt,topsep=5pt,itemsep=3pt]
\item \label{newremarks1.1} If $\gamma\in L\setminus\set{\top}$, then $\top$ and $x$ are incomparable in $\overline{L}^\gamma$.
\item \label{newremarks1.2} The isolated element $x$ in $\overline{L}^\gamma$ is completely join-prime, i.e.\ $x\vartriangleleft x$ holds  in $\overline{L}^\gamma$. 
\end{enumerate}
\end{remarks}

The first two typical examples of strictly nondistributive lattices are the extensions of ${\mathsf{N}}_5$ and ${\mathsf{M}}_3$ by an isolated element in the sense of Lemma~\ref{newnewlemma1.1C}, denoted simply by $\overline{\mathsf{N}}_5$ and $\overline{\mathsf{M}}_3$, whose Hasse diagrams are given in Figure~\ref{fig:N5 and M3}.
 A complete sublattice $S$ of
a complete lattice $L$ is called an \emph{extended pentagon}, respectively an \emph{extended diamond}, if $S$ is isomorphic to  $\overline{\mathsf{N}}_5$, respectively to $\overline{\mathsf{M}}_3$.
Both $\overline{\mathsf{N}}_5$ and $\overline{\mathsf{M}}_3$  have $7$ element. 
\begin{figure}[H] 
         \vskip-5pt
\centering
{\setlength\tabcolsep{7pt}\begin{tabular}{cccc}
         \begin{tikzpicture}[x=7mm,y=7mm,baseline={([yshift=-.5ex]current bounding box.center)},vertex/.style={anchor=base}]
         \draw (1,0)--(0,0.5)--(0,1.5)--(1,2);
         \draw (1,0)--(2,1)--(1,2);
         \draw (0,1.5)--(1,2.);
         \mycircle{1,0};
         \node[right] at (1,0) {$\bot$};
         \mycircle{2,1};
         \node[right] at (2,1) {$\beta$};
         \mycircle{0,1.5};
         \node[left] at (0,1.5) {$\gamma$};
         \mycircle{0,0.5};
         \node[left] at (0,0.5) {$\alpha$};
         \mycircle{1,2};
         \node[right] at (1,2) {$\top$};
         \node[right] at (1,3) {{\color{white}$\overline{\top}$}}; 
        \node[above] at (2.3,-0.35)  {${\mathsf{N}_5}$}; 
         \end{tikzpicture}
&
         \begin{tikzpicture}[x=7mm,y=7mm,baseline={([yshift=-.5ex]current bounding box.center)},vertex/.style={anchor=base}]
         \draw (1,0)--(0,0.5)--(0,1.5)--(1,2)--(1,3);
         \draw (1,0)--(2,1)--(1,2);
         \draw (0,1.5)--(0,2.5)--(1,3);
         \draw (0,1.5)--(1,2.);
         \mycircle{0,2.5};
         \node[left] at (0,2.5) {$x$};
         \mycircle{1,0};
         \node[right] at (1,0) {$\bot$};
         \mycircle{2,1};
         \node[right] at (2,1) {$\beta$};
         \mycircle{0,1.5};
         \node[left] at (0,1.5) {$\gamma$};
         \mycircle{0,0.5};
         \node[left] at (0,0.5) {$\alpha$};
         \mycircle{1,2};
         \node[right] at (1,2) {$\top$};
         \mycircle{1,3};
         \node[right] at (1,3) {$\overline{\top}$};
         \node[above] at (2.3,-0.35)   {$\overline{\mathsf{N}}_5$}; 
         \end{tikzpicture}
&
         \begin{tikzpicture}[x=7mm,y=7mm,baseline={([yshift=-.5ex]current bounding box.center)},vertex/.style={anchor=base}]
         \draw (1,0)--(2,1)--(1,2);
         \draw (1,0)--(0,1)--(1,2);
         \draw (1,0)--(1,1)--(1,2);
         \mycircle{1,0};
         \node[left] at (1,0) {$\bot$};
          \mycircle{0,1};
         \node[left] at (0,1) {$\alpha$};
         \mycircle{1,1};
         \node[left] at (1,1) {$\gamma$};
         \mycircle{2,1};
         \node[right] at (2,1) {$\beta$};
         \mycircle{1,2};
         \node[left] at (1,2) {$\top$};
         \node[right] at (1,3) {{\color{white}$\overline{\top}$}}; 
          \node[above] at (-0.2,-0.35) {${\mathsf{M}_3}$}; 
         \end{tikzpicture}
&
         \begin{tikzpicture}[x=7mm,y=7mm,baseline={([yshift=-.5ex]current bounding box.center)},vertex/.style={anchor=base}]
         \draw (1,0)--(2,1)--(1,2)--(1,3);
         \draw (1,0)--(0,1)--(1,2);
         \draw (1,0)--(1,1)--(1,2);
         \draw (1,1)--(2,2.)--(1,3);
         \draw[white,line width=4pt] (2,1.)--(1,2.);
         \draw (2,1)--(1,2.);
         \mycircle{2,2.};
         \node[right] at (2,2.) {$x$};
         \mycircle{1,0};
         \node[left] at (1,0) {$\bot$};
          \mycircle{0,1};
         \node[left] at (0,1) {$\alpha$};
         \mycircle{1,1};
         \node[left] at (1,1) {$\gamma$};
         \mycircle{2,1};
         \node[right] at (2,1) {$\beta$};
         \mycircle{1,2};
         \node[left] at (1,2) {$\top$};
         \mycircle{1,3};
         \node[left] at (1,3) {$\overline{\top}$}; 
        \node[above] at (-0.2,-0.35) {$\overline{\mathsf{M}}_3$}; 
         \end{tikzpicture}
         \end{tabular}}
\vskip-5pt                 \caption{The lattices $\mathsf{N}_5$ and $\mathsf{M}_3$ and their extensions}
\label{fig:N5 and M3}
         \vskip-10pt
        \end{figure}

We are also interested in two strictly nondistributive lattices with $8$ and $9$ elements:  the extensions of ${\mathsf{L}}_6$ and ${\mathsf{L}}_7$ by an isolated element  in the sense of Lemma~\ref{newnewlemma1.1C}, are  simply denoted by $\overline{\mathsf{L}}_6$ and $\overline{\mathsf{L}}_7$, whose Hasse diagrams are given in Figure~\ref{fig:L6 and L7}.
\begin{figure}[H] 
         \vskip-5pt
\centering
{\setlength\tabcolsep{5pt}\begin{tabular}{cccc}
 \begin{tikzpicture}[x=7mm,y=7mm,baseline={([yshift=-.5ex]current bounding box.center)},vertex/.style={anchor=base}]
         \draw (1,0)--(2,1)--(1,2.25);
         \draw (1,0)--(0,0.75)--(0,1.5)--(1,2.25);
         \draw (1,0)--(1,0.75);
         \draw (1,0.75)--(0,1.5);
         \mycircle{1,0};
         \node[left] at (1,0) {$\bot$};
         \mycircle{0,0.75};
         \node[left] at (0,0.75) {$\alpha$};
         \mycircle{1,0.75};
         \node[above] at (1,0.75) {$\gamma$};
         \mycircle{2,1};
         \node[right] at (2,1) {$\beta$};
         \mycircle{0,1.5};
         \node[above left] at (0.55,1.5) {$\alpha \vee \gamma$};
         \mycircle{1,2.25};
         \node[left] at (1,2.25) {$\top$};
         \node[right] at (1,3) {\color{white}$\overline{\top}$}; 
         \node[above] at (2.3,-0.35)  {${\mathsf{L}}_6$}; 
         \end{tikzpicture} 
& 
         \begin{tikzpicture}[x=7mm,y=7mm,baseline={([yshift=-.5ex]current bounding box.center)},vertex/.style={anchor=base}]
         \draw (1,0)--(2,1)--(1,2.25)--(1,3);
         \draw (1,0)--(0,0.75)--(0,1.5)--(1,2.25);
         \draw (1,0)--(1,0.75);
         \draw (1,0.75)--(0,1.5);
         \draw (1,0.75)--(2,2.25)--(1,3);
         \draw[white,line width=4pt] (2,1)--(1,2.25);
         \draw (2,1.)--(1,2.25);
         \mycircle{2,2.25};
         \node[right] at (2,2.25) {$x$};
         \mycircle{1,0};
         \node[left] at (1,0) {$\bot$};
         \mycircle{0,0.75};
         \node[left] at (0,0.75) {$\alpha$};
         \mycircle{1,0.75};
         \node[above] at (1,0.75) {$\gamma$};
         \mycircle{2,1};
         \node[right] at (2,1) {$\beta$};
         \mycircle{0,1.5};
         \node[above left] at (0.55,1.5) {$\alpha \vee \gamma$};
         \mycircle{1,2.25};
         \node[left] at (1,2.25) {$\top$};
         \mycircle{1,3};
         \node[left] at (1,3) {$\overline{\top}$}; 
         \node[above] at (2.3,-0.35)   {$\overline{\mathsf{L}}_6$}; 
         \end{tikzpicture}
&
         \begin{tikzpicture}[x=7mm,y=7mm,baseline={([yshift=-.5ex]current bounding box.center)},vertex/.style={anchor=base}]
         \draw (1,0)--(2,0.75)--(2,1.5)--(1,2.25);
         \draw (1,0)--(0,0.75)--(0,1.5)--(1,2.25);
         \draw (1,0)--(1,0.75)--(0,1.5);
         \draw (1,0.75)--(2,1.5);
         \draw (2,1.5)--(1,2.25);
         \mycircle{1,0};
         \node[left] at (1,0) {$\bot$};
         \mycircle{0,0.75};
         \node[left] at (0,0.75) {$\alpha$};
         \mycircle{1,0.75};
         \node[above] at (1,0.75) {$\gamma$};
         \mycircle{2,0.75};
         \node[right] at (2,0.75) {$\beta$};
         \mycircle{0,1.5};
         \node[above left] at (0.55,1.5) {$\alpha \vee \gamma$};
         \mycircle{2,1.5};
         \node[above right] at (1.6,1.5) {$\beta \vee \gamma$};
         \mycircle{1,2.25};
         \node[left] at (1,2.25) {${\top}$};
         \node[right] at (1,3) {\color{white}$\overline{\top}$}; 
          \node[above] at (-0.2,-0.35) {${\mathsf{L}}_7$}; 
         \end{tikzpicture}
&
         \begin{tikzpicture}[x=7mm,y=7mm,baseline={([yshift=-.5ex]current bounding box.center)},vertex/.style={anchor=base}]
         \draw (1,0)--(2,0.75)--(2,1.5)--(1,2.25)--(1,3);
         \draw (1,0)--(0,0.75)--(0,1.5)--(1,2.25);
         \draw (1,0)--(1,0.75)--(0,1.5);
         \draw (1,0.75)--(2,1.5);
         \draw (1,0.75)--(2,2.25)--(1,3);
         \draw[white,line width=4pt] (2,1.5)--(1,2.25);
         \draw (2,1.5)--(1,2.25);
         \mycircle{2,2.25};
         \node[right] at (2,2.25) {$x$};
         \mycircle{1,0};
         \node[left] at (1,0) {$\bot$};
         \mycircle{0,0.75};
         \node[left] at (0,0.75) {$\alpha$};
         \mycircle{1,0.75};
         \node[above] at (1,0.75) {$\gamma$};
         \mycircle{2,0.75};
         \node[right] at (2,0.75) {$\beta$};
         \mycircle{0,1.5};
         \node[above left] at (0.55,1.5) {$\alpha \vee \gamma$};
         \mycircle{2,1.5};
         \node[right] at (2,1.5) {$\beta \vee \gamma$};
         \mycircle{1,2.25};
         \node[left] at (1,2.25) {${\top}$};
         \mycircle{1,3};
         \node[left] at (1,3) {$\overline{\top}$}; 
          \node[above] at (-0.2,-0.35) {$\overline{\mathsf{L}}_7$}; 
         \end{tikzpicture}
         \end{tabular}}
 \vskip-5pt                 \caption{The lattices $\mathsf{L}_6$ and $\mathsf{L}_7$ and their extensions}
          \label{fig:L6 and L7}
         \vskip-10pt
        \end{figure}

        Our next result shows the necessity of the existence of one of these $4$ strictly  nondistributive lattices as complete sublattices in any complete strictly nondistributive lattice.

\begin{proposition} \label{newthm1} If $L$ is a complete and strictly nondistributive lattice, then there exists  a complete sublattice of $L$ being isomorphic to $\overline{\mathsf{N}}_5$, $\overline{\mathsf{M}}_3$, $\overline{\mathsf{L}}_6$ or $\overline{\mathsf{L}}_7$.
\end{proposition}

\begin{proof} Let $L$ be a complete and strictly nondistributive lattice. If $\alpha,\beta,x\in L$ satisfy (\ref{strictly nondistributive}), i.e.\ the relations $x\not\le \alpha\vee \beta$ and $x\wedge (\alpha\vee \beta)\not\le
(x\wedge \alpha)\vee (x\wedge \beta)$ hold, then $L$ has at least $5$ distinct elements $\bot$, $\alpha$, $\beta$, $\alpha\vee\beta$ and $x$ satisfying the properties
 \stepcounter{num}
\begin{equation}\label{newnewequation1.3}\alpha\vee\beta\not\le x, \quad x\wedge(\alpha\vee\beta)\not\le \alpha \quad \text{and}\quad  x\wedge(\alpha\vee\beta)\not\le \beta.
\end{equation}
Since $x\not\le \alpha\vee \beta$, (\ref{newnewequation1.3}) implies that $\alpha\vee \beta$ 
and $x$ are incomparable, and consequently we have $x\vee(\alpha\vee\beta)\notin \set{\bot,\alpha,\beta,\alpha\vee\beta,x}$.
Referring again to (\ref{newnewequation1.3}), the relation ${x\wedge(\alpha\vee\beta)}\notin\set{\bot,\alpha,\beta,\alpha\vee\beta,x,x\vee(\alpha\vee\beta)}$ follows. Now  we put $a:=\alpha\vee \beta$ and $\gamma:=x\wedge a$ and infer that 
\[S=\set{\bot,\alpha,\beta,a, \gamma,\alpha\vee \gamma,\beta\vee \gamma,x ,x\vee\alpha,x\vee\beta,x\vee a}\]
is a complete sublattice of $L$ with at least $7$ distinct elements. Moreover (\ref{newnewequation1.3}) 
 implies $\gamma\not\le \alpha$ and  $\gamma\not\le \beta$  and a fortiori $x\not\le \alpha$ and $x\not\le \beta$. Hence $\alpha$ and $\beta$ are atoms in $S$  and $\gamma\neq \bot$.The binary meet operation with $\gamma$ in $S$ restricted to $\alpha$ and $\beta$ has the form:
 \[\alpha\wedge \gamma=\begin{cases} \bot,& \alpha\not\le \gamma,\\ \alpha,& \alpha\le \gamma,\end{cases} \quad \text{and}\quad
\beta\wedge \gamma=\begin{cases} \bot,& \beta\not\le \gamma,\\ \beta,& \beta\le\gamma.\end{cases}
\]
The size of $S$ depends on the values of $\alpha\vee \gamma$, $\beta\vee \gamma$, $x\vee\alpha$ and $x\vee\beta$. 
In order to show that there exist complete sublattices of $S$ being isomorphic to $\overline{\mathsf{N}}_5$, $\overline{\mathsf{M}}_3$, $\overline{\mathsf{L}}_6$ or $\overline{\mathsf{L}}_7$ we distinguish the following cases:
\vskip 2pt

\noindent
{\sf Case 1}. $\alpha\le\gamma$ or $\beta \le \gamma$. Let us assume $\alpha\le \gamma$ --- i.e.\ $\alpha\vee \gamma=\gamma$. Then $\beta\not\le \gamma$ and $\beta\vee \gamma=a$. Since $\gamma\le x$, $x\vee\beta=x\vee a$ and $x\vee \alpha=x$ follow. Hence $S$ is isomorphic to the extended pentagon $\overline{\mathsf{N}}_5$. In particular $x^-=\gamma$ and $x^+=x\vee a$ hold. The case $\beta\le \gamma$ can be treated analogously.
\vskip 2pt

\noindent
{\sf Case 2}. $\alpha\not\le \gamma$ and $\beta\not\le \gamma$.  Then $\gamma$ is the third atom in $S$.
\vskip 2pt
\noindent
{\sf Case 2.1}. $\alpha\not\le \gamma$, $\beta\not\le \gamma$  and $\alpha \vee \gamma=a=\beta\vee \gamma$. Then $\alpha\not\le x$ and $\beta\not\le x$ follow. Hence $S$ is isomorphic to the extended diamond $\overline{\mathsf{M}}_3$ with three atoms. In particular $x^-=\gamma$ and $x^+=x\vee a$.
\vskip 2pt
\noindent
{\sf Case 2.2}. $\alpha\not\le \gamma$, $\beta\not\le \gamma$, $\alpha\vee \gamma=a$ and $\beta \vee \gamma\neq a$. Then again $\alpha\not\le x$, but we cannot control the value of $x\vee \beta$. Therefore we distinguish two further cases:\\
{\sf Case 2.2.1}. $\beta \le x$. Then $x\vee \alpha=x\vee a$, and $S$ satisfies the following Hasse diagram:
\begin{center} 
\begin{tikzpicture}[x=7mm,y=7mm,baseline={([yshift=-.5ex]current bounding box.center)},vertex/.style={anchor=base}]
         \draw (1,0)--(2,0.75)--(2,1.5)--(2,2.25)--(1,3);
       \draw (1,0)--(1,0.75)--(2,1.5)--(1,2.25)--(1,3);
         \draw (1,0)--(0,1)--(1,2.25);
         \mycircle{2,2.25};
         \node[right] at (2,2.25) {$x$};
         \mycircle{1,0};
         \node[left] at (1,0) {$\bot$};
         \mycircle{0,1};
         \node[left] at (0,1) {$\alpha$};
         \mycircle{1,0.75};
         \node[above] at (1,0.75) {$\gamma$};
         \mycircle{2,0.75};
         \node[right] at (2,0.75) {$\beta$};
         \mycircle{2,1.5};
         \node[right] at (2,1.5) {$\beta \vee \gamma$};
         \mycircle{1,2.25};
         \node[left] at (1,2.25) {$a$};
         \mycircle{1,3};
         \node[left] at (1,3) {$x \vee a$}; 
         \end{tikzpicture}
\end{center}
Obviously $\set{\bot, \alpha,\gamma, \beta\vee \gamma, a, x , x\vee a}$ is a complete sublattice of $S$ being isomorphic to $\overline{\mathsf{N}}_5$.\\
{\sf Case 2.2.2}. $\beta\not\le x$. Then the Hasse diagram of $S$ has the form:
\begin{center} 
\begin{tikzpicture}[x=7mm,y=7mm,baseline={([yshift=-.5ex]current bounding box.center)},vertex/.style={anchor=base}]
         \draw (1,0)--(2,0.75);
         \draw (1,0)--(1,0.75)--(1,1.5)--(1,2.25)--(1,3);
         \draw (1,0)--(0,1)--(1,2.25);
         \draw (1,0.75)--(2,2.25)--(1,3);
         \draw[white,line width=4pt]  (2,0.75)--(1,1.5);
         \draw (2,0.75)--(1,1.5);
         \mycircle{2,2.25};
         \node[right] at (2,2.25) {$x$};
         \mycircle{1,0};
         \node[left] at (1,0) {$\bot$};
         \mycircle{0,1};
         \node[left] at (0,1) {$\alpha$};
         \mycircle{1,0.75};
         \node[right] at (1,0.75) {$\gamma$};
         \mycircle{2,0.75};
         \node[right] at (2,0.75) {$\beta$};
         \mycircle{1,1.5};
         \node[left] at (1,1.5) {$\beta \vee \gamma$};
         \mycircle{1,2.25};
         \node[left] at (1,2.25) {$a$};
         \mycircle{1,3};
         \node[left] at (1,3) {$x \vee a$}; 
         \end{tikzpicture}
\end{center}
and consequently $S$ is isomorphic to $\overline{\mathsf{L}}_6$ (cf.\ Figure~\ref{fig:L6 and L7}). In particular  $x^-=\gamma$ and $x^+=x\vee a$ hold.\\
If we interchange the role of $\alpha$ and $\beta$ in the previous consideration, then the case $\alpha\not\le \gamma$, $\beta\not\le \gamma$, $\alpha\vee \gamma\neq a$ and $\beta \vee \gamma=a$ can be treated analogously.
\vskip 2pt
\noindent
{\sf Case 2.3}. $\alpha\not\le \gamma$, $\beta\not\le \gamma$, $\alpha\vee \gamma\neq a$ and $\beta \vee \gamma\neq a$. Then $\beta\not\le \alpha \vee \gamma$ and $\alpha\not\le \beta\vee \gamma$.\\
{\sf Case 2.3.1}. Let us further assume $\alpha\not\le x$ and $\beta\not\le x$ --- i.e.\ $x\vee\alpha\neq x$ and $x\vee \beta\neq x$. If $x\vee \alpha=x\vee a=x\vee \beta$, then $S$ is isomorphic to  $\overline{\mathsf{L}}_7$ (cf. Figure 2) with $x^-=\gamma$ and $x^+=x\vee a$. On the other hand, if $x\vee \alpha \neq x\vee a$ or $x\vee \beta \neq x\vee a$, then there exists a complete sublattice of $S$ being isomorphic to $\overline{\mathsf{N}}_5$. In fact, let us assume $x\vee \alpha\neq x\vee a$. Then the complete sublattice $\set{\bot, \alpha,\beta, \alpha\vee \gamma, a , x\vee \alpha,x\vee a}$ is isomorphic to $\overline{\mathsf{N}}_5$. The case $x\vee \beta\neq x\vee a$ can be treated analogously.\\
{\sf Case 2.3.2}. $\alpha\le x$ and $\beta\not\le x$. Then $\beta \vee x=x\vee a$ and $\set{\bot,\beta, \alpha\vee \gamma, a,x,x\vee a}$ is a complete sublattice of $S$ being isomorphic to $\overline{\mathsf{N}}_5$. The case $\alpha\not\le x$ and $\beta\le x$ can be treated analogously.
\end{proof}

\begin{comments} \label{newnewcomment2} (a) We maintain the notation from the proof of Proposition~\ref{newthm1} and observe that $P=\set{\bot,\alpha,\beta,\gamma,\alpha\vee \gamma, \beta\vee \gamma,a}$with the atoms $\alpha$ and $\beta$ satisfying $\alpha\vee \beta =a$ is a complete sublattice of $S$. % in the sense of $\cat{Sup}$.
 Since in the Cases~1, 2.1, 2.2.2 and in the first part of the Case 2.3.1 the properties $x^-=\gamma$ and $x^+=x\vee a$ hold,  Lemma~\ref{newnewlemma1.1C} implies that $S$ coincides with the extension $\overline{P}^\gamma=P\cup\set{x,x\vee a}$ by an isolated element $x$. This construction does not apply to the remaining cases in the proof of Proposition~\ref{newthm1}.\\[1mm]
(b) The converse of Proposition~\ref{newthm1} cannot hold, since in $\cat{Sup}$ every complete lattice $L$ can be embedded into a completely distributive lattice --- e.g.\
\[L\xrightarrow{\,\varphi\,} P(L)^{op},\quad \varphi(\alpha)=\tbigcup\set{A\in P(L)\mid \alpha\le \tbigwedge A}, \quad \alpha\in L,\]
where $P(L)^{op}$ is the dual lattice of the power set of $L$. 
\end{comments}

\begin{corollary}\label{newcor1} Any strictly nondistributive lattice must have at least $7$ elements, and if it has exactly $7$ elements, then it must be either an extended pentagon or an extended diamond.
\end{corollary}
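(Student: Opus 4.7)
The plan is to deduce the statement as an immediate cardinality consequence of Theorem~\ref{newthm1}. Given a strictly nondistributive complete lattice $L$, that theorem furnishes a complete sublattice $S\subseteq L$ which is isomorphic to one of the four lattices $\overline{\mathsf{N}}_5$, $\overline{\mathsf{M}}_3$, $\overline{\mathsf{L}}_6$ or $\overline{\mathsf{L}}_7$.

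The next step is simply to read off the sizes of these four candidates from the Hasse diagrams in Figures~\ref{fig:N5 and M3} and~\ref{fig:L6 and L7}: both $\overline{\mathsf{N}}_5$ and $\overline{\mathsf{M}}_3$ have $7$ elements, while $\overline{\mathsf{L}}_6$ has $8$ and $\overline{\mathsf{L}}_7$ has $9$. Since $S$ is contained in $L$ as a set, this yields $|L|\ge |S|\ge 7$, which establishes the lower bound.

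For the second assertion, I would assume $|L|=7$. Then $|S|\le 7$ rules out the possibilities $S\cong \overline{\mathsf{L}}_6$ and $S\cong \overline{\mathsf{L}}_7$, so $S$ must be isomorphic to one of the two $7$-element candidates $\overline{\mathsf{N}}_5$ or $\overline{\mathsf{M}}_3$. The equality $|S|=7=|L|$ together with $S\subseteq L$ then forces $S=L$, so that $L$ itself is (isomorphic to) an extended pentagon or an extended diamond.

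There is no real obstacle here: the substantive classification work has already been done in Theorem~\ref{newthm1}, and the corollary reduces to a finite inspection of the orders of the four model lattices together with the elementary observation that a proper subset of a $7$-element set cannot itself have $7$ elements.
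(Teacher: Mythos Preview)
Your proposal is correct and is exactly the argument the paper intends: the corollary is stated without proof immediately after Theorem~\ref{newthm1}, and the cardinality count you give (the four model lattices have $7$, $7$, $8$, and $9$ elements, so $|L|\ge 7$, with equality forcing $S=L\cong\overline{\mathsf{N}}_5$ or $\overline{\mathsf{M}}_3$) is precisely the intended deduction. One cosmetic point: the corollary says ``lattice'' rather than ``complete lattice'', but this is harmless since a lattice with fewer than $7$ elements is finite, hence complete, so Theorem~\ref{newthm1} still applies to yield the contradiction.
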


\section{Construction of unitally  nondistributive quantales}\label{sec:2}

As we announced in the introduction, we say that a unital quantale $\alg{Q}=(\alg{Q},\ast,e)$ is 
\emph{unitally nondistributive} if it satisfies the
following properties:
\begin{enumerate}[label=\textup{(\arabic*)},align=right,leftmargin=0pt,labelwidth=10pt,itemindent=20pt,labelsep=5pt,topsep=5pt,itemsep=3pt]
\item $\tbigvee\set{\alpha\in \alg{Q}\mid \alpha
\vartriangleleft e}=e$ (the unit $e$ is
\donotbreakdash{$\vartriangleleft$}approximable).
\item There exists a subset $A$ of $\alg{Q}$ such that
$e\wedge\bigl(\tbigvee A\bigr)\not\le \tbigvee_{\alpha\in A}
(e\wedge \alpha)$.
\end{enumerate}

\begin{proposition}\label{newprop3}
If $\alg{Q}$ is a finite unitally nondistributive quantale with unit $e$ then
there exist $\alpha,\beta\in \alg{Q}$ such that \stepcounter{num}
\begin{equation}\label{n.1C}e\wedge (\alpha\vee \beta)\not\le
(e\wedge \alpha)\vee (e\wedge \beta)\quad \text{and}\quad e\not\le \alpha\vee \beta.
\end{equation}
Hence the underlying lattice of $\alg{Q}$ is strictly nondistributive.
\end{proposition}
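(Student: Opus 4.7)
The plan is to split the argument into two steps: first, reduce the failure of distributivity exhibited by an arbitrary subset $A$ to one exhibited by just two elements, and second, show that for such a pair the condition $e\not\le\alpha\vee\beta$ comes for free from the \donotbreakdash{$\vartriangleleft$}approximability of $e$.

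\textbf{Step 1 (from $A$ to a pair).} By finiteness of $\alg{Q}$ the witness set $A$ in condition~(2) of unital nondistributivity is finite, and I would induct on $|A|$. The case $|A|\le 2$ is immediate. For $|A|=n\ge 3$, pick some $\alpha_0\in A$ and set $\alpha=\tbigvee(A\setminus\set{\alpha_0})$ and $\beta=\alpha_0$, so that $\alpha\vee\beta=\tbigvee A$. Either $e\wedge(\alpha\vee\beta)\not\le (e\wedge\alpha)\vee(e\wedge\beta)$, in which case this pair is already the required witness; or the reverse inequality holds, and combining it with the failure $e\wedge\tbigvee A\not\le\tbigvee_{\gamma\in A}(e\wedge\gamma)$ forces $e\wedge\alpha\not\le\tbigvee_{\gamma\in A\setminus\set{\alpha_0}}(e\wedge\gamma)$. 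In the latter case $A\setminus\set{\alpha_0}$ is a witness set of strictly smaller size and the induction closes.

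\textbf{Step 2 (incomparability).} Let $(\alpha,\beta)$ be the pair produced by Step~1. Suppose for contradiction that $e\le\alpha\vee\beta$, so that $e\wedge(\alpha\vee\beta)=e$ and the failure of distributivity reads $e\not\le (e\wedge\alpha)\vee(e\wedge\beta)$. For each $\gamma\in\alg{Q}$ with $\gamma\vartriangleleft e$ one has $\gamma\le e\le\alpha\vee\beta$, and the very definition of $\vartriangleleft$ applied to the two-element family $\set{\alpha,\beta}$ yields $\gamma\le\alpha$ or $\gamma\le\beta$; together with $\gamma\le e$ this gives $\gamma\le (e\wedge\alpha)\vee(e\wedge\beta)$. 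Joining over all such $\gamma$ and using condition~(1) one obtains $e\le (e\wedge\alpha)\vee(e\wedge\beta)$, a contradiction. Therefore $e\not\le\alpha\vee\beta$, and the triple $(\alpha,\beta,e)$ realises (\ref{strictly nondistributive}) with $x=e$, which is precisely (\ref{n.1C}).

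The proof is essentially bookkeeping, and I do not expect any genuine obstacle. The one conceptual observation lies behind Step~2: \donotbreakdash{$\vartriangleleft$}approximability of $e$ automatically forces $e$ to distribute across any family whose join already dominates $e$, so the extra clause $e\not\le\alpha\vee\beta$ in the definition of strict nondistributivity is not a strengthening but an automatic by-product of the failure of distributivity when the chosen element is $x=e$. Finiteness of $\alg{Q}$ enters only in Step~1 to enable the induction on $|A|$.
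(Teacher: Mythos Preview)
Your proof is correct and follows essentially the same approach as the paper's. The only cosmetic difference is in Step~1: the paper fixes an enumeration $A=\set{\alpha_1,\dots,\alpha_n}$, sets $\beta_i=\alpha_i\vee\cdots\vee\alpha_n$, and telescopes the chain of inequalities $e\wedge\beta_i\le(e\wedge\alpha_i)\vee(e\wedge\beta_{i+1})$ to reach the same contradiction, whereas you organise the same descent as an induction on $|A|$; your Step~2 spells out in full what the paper compresses into a single sentence.
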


\begin{proof} Let $A=\set{\alpha_1,\ldots,\alpha_n}$ be a subset of  $\alg{Q}$ satisfying (2). Then the cardinality of $A$ is at least $2$, and we define $\beta_i:= {\alpha_i\vee\cdots\vee\alpha_n}$ for each $i\in
\set{1,\ldots,n}$. If we assume $e\wedge \beta_i\le (e\wedge \alpha_i)\vee
(e\wedge \beta_{i+1})$ for each $i\in \set{1,\ldots,n-1}$, then
we would have
\begin{align*}
e\wedge\bigl(\tbigvee A\bigr)&=e\wedge\beta_1\le
(e\wedge \alpha_1)\vee (e\wedge
\beta_{2})\le \cdots\le (e\wedge\alpha_1)\vee\cdots\vee(e\wedge\alpha_n),
\end{align*}
which is a contradiction to (2). Hence
there exists $i\in \set{1,\ldots,n-1}$ such that the relation $e\wedge
\beta_i\not\le (e\wedge \alpha_i)\vee (e\wedge \beta_{i+1})$ holds.
If we now put $\alpha:=\alpha_{i}$ and $\beta:=\beta_{i+1}$,
then $\alpha\vee \beta=\beta_i$, and the first part of (\ref{n.1C}) is verified.
 Further, since the unit $e$ is \donotbreakdash{$\vartriangleleft$}approximable, the first part of (\ref{n.1C}) implies the second part of (\ref{n.1C}). 
\end{proof}

The same arguments can be used for unitally nondistributive quantales such that the underlying
lattice is meet continuous, i.e.\ binary meets distribute over
directed suprema. Hence Proposition~\ref{newprop3} holds also for  this class of quantales. 

In view of Proposition~\ref{newprop3} and Corollary~\ref{newcor1} we conclude that every unitally nondistributive and meet continuous quantale contains at least $7$ elements. This observation is a motivation to give a classification of  all unitally nondistributive  quantales on the set with $7$ elements (see Subsection~\ref{subsec:3.1} infra).

%%%%%%%%%%%%%%%%%%%%%%%%%%%%%%%%%%%%%%%%%%%%%%%%%%%%%%%%%%%%%%%%%%%%%%%%%%%%%%%%%%%%%%%%%%%%%%%%%%%%%%%%%%%%%%%%%%%%%%%%%%%%%%%%%%%%%%%%%%%%%%%%%%%%%%%%%%%%%%

%%%%%%%%%%%%%%%%%%%%%%%%%%%%%%%%%%%%%%%%%%%%%%%%%%%%%%%%%%%%%%%%%%%%%%%%%%%%%%%%%%%%%%%%%%%%%%%%%%%%%%%%%%%%%%%%%%%%%%%%%%%%%%%%%%%%%%%%%%%%%%%%%%%%%%%%%%%%%%%%%%%%%%%%%%%%%%%%%%%%%%%%%%%%%%%%%%%%%%%%%%%%%%%%%%%%%%%%%%%%%%%%%%%%%%%%%%%%
%%%%%%%%%%%%%%%%%%%%%%%%%%%%%%%%%%%%%%%%%%%%%%%%%%%%%%%%%%%%%%%%%%%%%%%%%%%%%%%%%%%%%%%%%%%%%%%%%%%%%%%%%%%%%%%%%%%%%%%%%%%%%%%%%%%%%%%%%%%%%%%%%%%%%%%%%%%%%%%%%%
Based on the observation and results of Section~\ref{sec:1} we will
present a procedure describing an extension of a quantale to a unitally
nondistributive quantale. This construction is based on the extension of the underlying lattice by an isolated element ({cf.\ Lemma}~\ref{newnewlemma1.1C}) which in this case will be the unit of the new quantale. In contrast with the previous situation, where \emph{any} lattice could be extended in this way, we need some additional condition on the quantale in order to have the required extension. We first note the following:

\begin{lemma}\label{newlem4}
Let $L$ be a complete lattice, $\gamma\in L\setminus\set{\top}$ and let $\alg{Q}$ be a unital quantale on the extension $\overline{L}^\gamma=L\cup\set{e,\overline{\top}}$ of $L$ in the sense of Lemma~\ref{newnewlemma1.1C} such that the isolated element $e$ is the unit of $\alg{Q}$. Then\textup:
\begin{enumerate}[label=\textup{(\arabic*)},leftmargin=20pt,labelwidth=10pt,itemindent=0pt,labelsep=5pt,topsep=5pt,itemsep=3pt]
\item\label{newlem4(1)} $(\gamma\ast\alpha)\vee( \alpha\ast\gamma)\le\alpha$ for all $\alpha\in L$,
\item\label{newlem4(2)} $\overline{\top}\ast\alpha=(\beta\ast \alpha)\vee\alpha$ and $\alpha\ast\overline{\top}=( \alpha\ast\beta)\vee\alpha$ for all $\beta\not\le \gamma$ and $\alpha\in L$,
\item\label{newlem4(3)} $L$ is a subquantale of $\alg{Q}$ if and only if $\top\ast\top\le \top$.
\item\label{newlem4(4)} If $\bigwedge\set{\beta \vee \gamma\mid \beta\in L, \ \beta\not\le \gamma}\le \gamma$, then $\gamma$ is two-sided in $\overline{L}^\gamma$. %i.e.\ $(\overline{\top}\ast\gamma)\vee(\gamma\ast\overline{\top})\le\gamma$.
\item\label{newlem4(5)} If $L$ is a subquantale of $\alg{Q}$, then the quantale multiplication in $\alg{Q}$ with unit $e$ is uniquely determined by its values in $L$. 
\end{enumerate}
\end{lemma}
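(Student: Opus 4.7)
The plan is to exploit the explicit description of the extended order provided in the statement and proof of Lemma~\ref{newnewlemma1.1C}. Three structural facts are used throughout: (a) $\gamma\le e$ in $\overline{L}^\gamma$; (b) whenever $\beta\in L$ satisfies $\beta\not\le\gamma$, one has $\beta\vee e=\overline{\top}$ (item (viii) of the proof); and (c) since $\gamma\ne\top$, the element $\top$ satisfies $\top\not\le\gamma$, so $\top$ is incomparable with $e$ in $\overline{L}^\gamma$, and consequently the set of elements of $\overline{L}^\gamma$ bounded above by $\top$ coincides with $L$.

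Given these preliminaries, \ref{newlem4(1)} is immediate from (a) and the unit axiom, since $\gamma\ast\alpha\le e\ast\alpha=\alpha$ and symmetrically $\alpha\ast\gamma\le\alpha$. For \ref{newlem4(2)} I would use (b) to rewrite $\overline{\top}=\beta\vee e$, then distribute $\ast$ across this join and apply the unit axiom to the $e$\nobreakdash-summand. For \ref{newlem4(3)} necessity is just the definition of a subquantale; for sufficiency, fact (c) forces any product of $L$\nobreakdash-elements $\alpha\ast\beta\le\top\ast\top\le\top$ to lie in $L$, so $L$ is closed under $\ast$.

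For \ref{newlem4(4)} I would combine \ref{newlem4(1)} and \ref{newlem4(2)}: for every $\beta\in L$ with $\beta\not\le\gamma$,
\[\overline{\top}\ast\gamma=(\beta\ast\gamma)\vee\gamma\le\beta\vee\gamma,\]
and taking the infimum over all such $\beta$ yields $\overline{\top}\ast\gamma\le\gamma$ by the standing hypothesis, while the symmetric computation gives $\gamma\ast\overline{\top}\le\gamma$. Finally, for \ref{newlem4(5)}, all products in $\overline{L}^\gamma$ involving the unit $e$ are fixed by the unit axiom; for $\alpha\in L$, item \ref{newlem4(2)} specialised to $\beta:=\top$ (permissible by (c)) expresses $\overline{\top}\ast\alpha$ and $\alpha\ast\overline{\top}$ in terms of the $L$\nobreakdash-multiplication; and $\overline{\top}\ast\overline{\top}$ is reduced, via $\overline{\top}=\top\vee e$ and bilinearity of $\ast$, to a join of terms already covered.

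The whole argument is essentially formal once the order\nobreakdash-theoretic facts (a)--(c) have been isolated. The only mildly subtle step is \ref{newlem4(4)}, where one must recognise that the correct way to let the infimum hypothesis bite on $\overline{\top}\ast\gamma$ is to parametrise $\overline{\top}$ as $\beta\vee e$ for an \emph{arbitrary} $\beta\not\le\gamma$ and then intersect the resulting family of upper bounds.
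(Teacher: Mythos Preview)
Your proposal is correct and follows essentially the same approach as the paper's proof, which is a terse one-paragraph sketch invoking exactly the same ingredients: $\gamma\le e$ for \ref{newlem4(1)}, the identity $\overline{\top}=\beta\vee e$ from item~(viii) for \ref{newlem4(2)}, and the combination of \ref{newlem4(1)} and \ref{newlem4(2)} for \ref{newlem4(4)}. Your treatment of \ref{newlem4(3)} and \ref{newlem4(5)} is more explicit than the paper's (which dismisses \ref{newlem4(3)} as ``obvious'' and says \ref{newlem4(5)} is an immediate corollary of \ref{newlem4(2)}), but the underlying reasoning is identical.
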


\begin{proof} The first assertion follows immediately since $\gamma\le e$ and $e$ is the unit of $\alg{Q}$. Since $\gamma\neq \top$, the set $\set{\beta\in L\mid\beta\not\le \gamma}$ is nonempty, and consequently \ref{newlem4(2)} follows from $\overline{\top}=\beta\vee e$ for all $\beta\not\le \gamma$ (cf.\ \ref{(viii)}). The third one is obvious. The assertion \ref{newlem4(4)} follows from \ref{newlem4(1)} and \ref{newlem4(2)}.
 Finally \ref{newlem4(5)} is an immediate corollary of \ref{newlem4(2)}.
\end{proof}

Now we have the following:

\begin{theorem}\label{newthm2} Let $\alg{Q}$ be a quantale, $\gamma\in \alg{Q}\setminus\{\top\}$ and let $\overline{\alg{Q}}^\gamma=\alg{Q}\cup\set{\overline{\top},e}$ be the extension of the underlying lattice  by an isolated element $e$ in the sense of Lemma~\ref{newnewlemma1.1C}. Then there exists a unique quantale structure on $\overline{\alg{Q}}^\gamma$ with unit $e$ and subquantale $\alg{Q}$ if and only if $\alg{Q}$ satisfies  the following conditions for all $\alpha\in \alg{Q}$ and $\beta\in \alg{Q}$ with $\beta\not\le \gamma$\textup:
\stepcounter{num}
\begin{eqnarray}\label{property(a)}
&(\gamma\ast\alpha)\vee( \alpha\ast\gamma)\le\alpha,\\
\stepcounter{num} \label{property(aa)}
&\top\ast \alpha\le (\beta \ast \alpha)\vee \alpha\quad \text{and} \quad \alpha\ast \top\le (\alpha\ast \beta)\vee \alpha.\end{eqnarray}
% Then $\alg{Q}$ is a subquantale of {\color{blue}the} unital quantale
%$\overline{\alg{Q}}^\gamma$ {\color{blue}with unit $e$ such that $e^+=\overline{\top}$, $e^-=\gamma$ and $e$}
%is \donotbreakdash{$\vartriangleleft$}approximable.\\[1pt]
Moreover, if there exist two elements $\alpha,\beta\in \alg{Q}$
satisfying the property
\stepcounter{num}
\begin{equation}\label{property(b)}\gamma\wedge (\alpha\vee \beta)\not\le
(\gamma\wedge \alpha)\vee (\gamma\wedge \beta),\end{equation}
then $\overline{\alg{Q}}^\gamma$ is a unitally
nondistributive quantale.
\end{theorem}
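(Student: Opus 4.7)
The plan is threefold: derive the necessity of~(\ref{property(a)}) and~(\ref{property(aa)}) directly from Lemma~\ref{newlem4}; construct the extension multiplication explicitly for sufficiency and verify the quantale axioms by a case analysis; and finally read off the unital nondistributivity from Lemma~\ref{newnewlemma1.1C}\ref{(vi)} together with the complete join-primeness of $e$ recorded in Remarks~\ref{newremarks1}.

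For necessity, assume the extended multiplication exists with unit $e$ and $\alg{Q}$ as a subquantale. Since $\gamma\le e$, Lemma~\ref{newlem4}\ref{newlem4(1)} immediately gives~(\ref{property(a)}). For~(\ref{property(aa)}), Lemma~\ref{newlem4}\ref{newlem4(2)} yields $\overline{\top}\ast\alpha=(\beta\ast\alpha)\vee\alpha$ for every $\beta\not\le\gamma$ and every $\alpha\in\alg{Q}$; from $\top\le\overline{\top}$ one therefore gets $\top\ast\alpha\le\overline{\top}\ast\alpha=(\beta\ast\alpha)\vee\alpha$, and the symmetric inequality follows in the same fashion.

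For sufficiency, the canonical candidate extension retains the given operation on $\alg{Q}\times\alg{Q}$, declares $e$ a two-sided identity, and prescribes $\overline{\top}\ast\alpha:=(\top\ast\alpha)\vee\alpha$, $\alpha\ast\overline{\top}:=(\alpha\ast\top)\vee\alpha$ for $\alpha\in\alg{Q}$, and $\overline{\top}\ast\overline{\top}:=\overline{\top}$. I would verify join-preservation in each variable by a case analysis on a subset $S\subseteq\overline{\alg{Q}}^\gamma$, distinguishing whether $\overline{\top}\in S$, whether $S\subseteq\alg{Q}$, whether $e\in S$ with $t:=\tbigvee(S\cap\alg{Q})\le\gamma$, or whether $e\in S$ with $t\not\le\gamma$. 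The first two cases are routine; for $t\le\gamma$ one needs $\xi\ast\alpha\le\xi$ whenever $\alpha\le\gamma$, which is exactly what~(\ref{property(a)}) provides; the delicate case $t\not\le\gamma$ (where $\tbigvee S=\overline{\top}$ although $\overline{\top}\notin S$) requires matching $\xi\ast\overline{\top}=(\xi\ast\top)\vee\xi$ with $\tbigvee_{\eta\in S}\xi\ast\eta=(\xi\ast t)\vee\xi$, and this is the precise content of~(\ref{property(aa)}) applied with $\beta=t$. Associativity is then handled case by case: every case involving $\overline{\top}$ unfolds, via the formulas above and the inequality $\top\ast\top\le\top$ that is automatic in $\alg{Q}$, into an identity that follows from associativity and join-preservation of $\ast$ in $\alg{Q}$. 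Uniqueness is then immediate from Lemma~\ref{newlem4}\ref{newlem4(5)}, and the subquantale property holds by construction.

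Finally, if $\alpha,\beta\in\alg{Q}$ satisfy~(\ref{property(b)}), then property~\ref{(vi)} of Lemma~\ref{newnewlemma1.1C} yields $e\wedge\zeta=\gamma\wedge\zeta$ for every $\zeta\in\alg{Q}$, so~(\ref{property(b)}) transcribes verbatim into $e\wedge(\alpha\vee\beta)\not\le(e\wedge\alpha)\vee(e\wedge\beta)$; together with the complete join-primeness of $e$ in $\overline{\alg{Q}}^\gamma$ from Remarks~\ref{newremarks1}\ref{newremarks1.2} --- which makes $e$ trivially \donotbreakdash{$\vartriangleleft$}approximable --- both defining conditions of unital nondistributivity are met. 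The main obstacle is the case analysis in the sufficiency direction, and specifically the subcase $e\in S$ with $t\not\le\gamma$: here one must reconcile the two representations of $\overline{\top}$ permitted by Lemma~\ref{newnewlemma1.1C}, and condition~(\ref{property(aa)}) turns out to be tailored precisely to this reconciliation.
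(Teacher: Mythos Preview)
Your proposal is correct and follows essentially the same route as the paper: necessity and uniqueness via Lemma~\ref{newlem4}, the same explicit extension formulas for sufficiency, and the final clause via Lemma~\ref{newnewlemma1.1C} and Remarks~\ref{newremarks1}. The paper is terser---it simply asserts that~(\ref{property(a)}) yields isotonicity and~(\ref{property(aa)}) yields join-preservation, without spelling out your case analysis or mentioning associativity explicitly---but your more detailed unpacking is a faithful elaboration of the same argument.
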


\begin{proof} The necessity follows from Lemma~\ref{newlem4}\,\ref{newlem4(1)} and \ref{newlem4(2)}, and the uniqueness of the quantale structure with unit $e$ and $\alg{Q}$ as subquantale follows from  Lemma~\ref{newlem4}\,\ref{newlem4(5)}. In order to establish the existence of the quantale structure on $\overline{\alg{Q}}^\gamma$ with unit $e$ and subquantale $\alg{Q}$ we extend the quantale multiplication on $\alg{Q}$ to the additional elements $e$ and $\overline{\top}$ as follows. First we require that $e$ is the unit  in  $\overline{\alg{Q}}^\gamma$, and secondly the multiplication involving $\overline{\top}$ is determined by:
\stepcounter{num}
\begin{equation}\label{newmultiplication}\overline{\top}\ast \overline{\top}=\overline{\top},\quad \overline{\top}\ast\alpha=(\top\ast\alpha)\vee\alpha,\quad \alpha\ast \overline{\top}=(\alpha\ast
\top)\vee\alpha,\qquad\alpha\in \alg{Q}.\end{equation}
Since $\gamma=e^-$, the property (\ref{property(a)}) implies that this extension of the quantale multiplication is isotone in $\overline{\alg{Q}}^\gamma$. Further, the property (\ref{property(aa)}) guarantees that this extension  preserves arbitrary joins in each variable separately, i.e.\ it is a quantale multiplication in $\overline{\alg{Q}}^\gamma$ with unit $e$. 
In fact, it follows from (\ref{newmultiplication}) that the left and right multiplications with $\overline{\top}$ are obviously  join-preserving. The interesting situation occurs when we consider $\alpha\in \alg{Q}$ and a subset $A$ of $\alg{Q}\cup\set{e}$ such that $\tbigvee A=\overline{\top}$.  Then we have necessarily that $e\in A$ and there exists $\beta\in \alg{Q}$ with $\beta\not\le \gamma$ and $\beta\in A$. Now we apply (\ref{newmultiplication}) and (\ref{property(aa)}) and obtain:
\[(\tbigvee A)\ast\alpha= (\top\ast\alpha ) \vee \alpha \le (\beta\ast\alpha )\vee \alpha \le \tbigvee\set{\varepsilon\ast\alpha\mid \varepsilon\in A}\le  (\top\ast\alpha ) \vee \alpha .\] 
Hence $(\tbigvee A)\ast\alpha= \tbigvee\set{\varepsilon\ast\alpha\mid \varepsilon\in A}$, and analogously we verify $\alpha\ast(\tbigvee A)= \tbigvee\set{\alpha\ast\varepsilon\mid \varepsilon\in A}$. 
Finally, by Lemma~\ref{newlem4}\,\ref{newlem4(3)}, $\alg{Q}$ is a subquantale of  $\overline{\alg{Q}}^\gamma$.\\
 
On the other hand, if there exist 
$\alpha,\beta\in \alg{Q}$ satisfying (\ref{property(b)}), then Lemma~\ref{newnewlemma1.1C} implies that the underlying lattice of $\overline{\alg{Q}}^\gamma$ is strictly nondistributive with isolated unit $e$, and so \ref{newlem4(2)} holds. Since $e$ is completely join-prime (cf.\  Remark~\ref{newremarks1}\,(2)) and in particular \donotbreakdash{$\vartriangleleft$}approximable, we conclude that
$\overline{\alg{Q}}^\gamma$ is a unitally nondistributive quantale.
\end{proof}

\begin{remarks} \label{newremarkCC} (1) If $\alg{Q}=(\alg{Q},\ast,\delta)$ is a unital quantale with unit $\delta$, then $\alg{Q}$ satisfies (\ref{property(a)}) if and only if $\gamma\le\delta$. \\[2pt]
(2) If $\beta\vee\gamma=\top$ for each $\beta\in \alg{Q}$ with $\beta\not\le \gamma$ then (\ref{property(a)}) trivially implies (\ref{property(aa)}). \\[2pt]
(3) In general (\ref{property(a)}) does not imply (\ref{property(aa)}).
 Indeed, let $\alg{Q}$ be a unital quantale on $\mathsf{L}_6$  such that $\gamma$ is the unit (referring to \cite{Catalogue} there exist at least $12$ quantales of this type). 
 Then $\alg{Q}$ satisfies (\ref{property(a)}), but we recall that $\alpha\not\le \gamma$ and $\alpha\vee\gamma\not\le \top$ (cf. Figure~\ref{fig:L6 and L7}) and observe that $\top\ast \gamma=\top\not\le (\alpha\ast \gamma)\vee \gamma$. Hence the property (\ref{property(aa)}) does not hold. 
\end{remarks}

\begin{examples}\label{examples1} (1) Every two-sided quantale
satisfies the properties  (\ref{property(a)}) and (\ref{property(aa)}) ---  in particular, the trivial quantale
determined by the condition  $\top\ast \top=\bot$. Consequently, if the underlying lattice of a two-sided quantale $\alg{Q}$ contains elements $\alpha,\beta, \gamma\in \alg{Q}$
satisfying  (\ref{property(b)}),  then $\overline{\alg{Q}}^\gamma$ is a unitally
nondistributive quantale. The simplest examples of this situation are precisely the trivial quantales on the pentagon and on the diamond with $3$ atoms.
\vskip 3pt

\noindent
(2) A further source of unitally nondistributive quantales consists of arbitrary groups with at least  three elements. For this purpose we first recall the construction how unital quantales are induced by arbitrary  groups $G=(G,\cdot,e)$ (cf.\ \cite[Sect.~5]{GHK21b}). First we provide $G$ with the discrete order  and  subsequently we apply the MacNeille completion. This construction leads to a complete lattice $G_{\infty}=G \cup\set{\bot,\top}$ by adding the universal bounds to the discretely ordered set $G$. 
In a second step we extend the group multiplication on $G$ to a quantale
multiplication $\ast$ on $G_{\infty}$ as follows:
\begin{enumerate}[label=\textup{--},leftmargin=15pt,labelwidth=10pt,itemindent=0pt,labelsep=5pt,topsep=5pt,itemsep=3pt]
\item If $\alpha,\beta\in G$, then $\alpha \ast
\beta=\alpha\cdot \beta$.
\item The action of the universal bounds $\bot$ and $\top$
on elements of $G$ is determined by:\\
$\top\ast \top =\top$, $\bot \ast \bot =\bot$, $\top \ast
\alpha=\top=\alpha\ast \top$ and $\bot\ast
\alpha=\bot=\alpha\ast \bot$ for all $\alpha\in G$.
\end{enumerate}
The unit of $G_{\infty}$ is the unit $e$ of the group $G$. If we choose the element $\gamma$ as the unit of the quantale $(G_{\infty},\ast,e)$, i.e.\ $\gamma=e$, then  $(G_{\infty},\ast,e)$ satisfies the properties (\ref{property(a)}) and (\ref{property(aa)}), and if $G$ contains at least three different elements, then $\gamma=e$ satisfies also (\ref{property(b)}). 
\pagebreak
Hence, if a group $G$ contains at least three elements,  we conclude from Theorem~\ref{newthm2} that the unital quantale $(G_{\infty},\ast,e)$ can always be extended to unitally nondistributive quantale.  In this sense Theorem~\ref{newthm2} produces an extremely large amount of this type of quantales. The simplest example of this situation is when $G$ contains  precisely three different elements. In this case $(G_{\infty},\ast,e)$ is the quantale induced by the cyclic group on the diamond with $3$ atoms.
\end{examples}

%%%%%%%%%%%%%%%%%%%%%%%%%%%%%%%%%%%%%%%%%%%%%%%%%%%%%%%%%%%%%%%%%%%%%%%%%%%%%%%%%%%%%%%%%%%%%%%%%%%%%%%%%%%%%%%%%%%%%%%%%%%%%%%%%%%%%%%%%%%%%%%%%%%%%%%%%%%%%%%%%%%%%%%%%%%%%%%%%%%%%%%%%%%%%%%%%%%%%%%%%%%%%%%%%%%%%%%%%%%%%%%%%%%%%%%%%%%%%%%%%%%
%%%%%%%%%%%%%%%%%%%%%%%%%%%%%%%%%%%%%%%%%%%%%%%%%%%%%%%%%%%%%%%%%%%%%%%%%%%%%%%%%%%%%%%%%%%%%%%%%%%%%%%%%%%%%%%%%%%%%%%%%%%%%%%%%%%%%%%%%%%%%%%%%%%%%%%%%%%%%%%%%%

\section{Unitally nondistributive quantales on finite sets}
\label{sec:3}

 Let $\alg{Q}$ be a (finite) unitally nondistributive  quantale with isolated unit $e$. Then  the underlying lattice is strictly nondistributive 
 (cf.\   Proposition~\ref{newprop3}). In particular, there exist elements $\alpha,\beta\in \alg{Q}$ such that $\set{\alpha,\beta,e}$ satisfies (\ref{n.1C}). Then $\alpha$ and $\beta$ are incomparable. We put $a:=\alpha\vee \beta$, $\gamma :=e\wedge(\alpha\vee \beta)$ and conclude from the proof of Proposition~\ref{newthm1} that also $e$ and $a$ are incomparable and the relations $\gamma\not\le \alpha$ and $\gamma\not\le \beta$  hold. In particular $\gamma\neq \bot$. Further, we assume  $e^-= \gamma$ and $e^+=e\vee a$. Then  
 \[S=\set{\bot, \alpha,\beta,a,\gamma, \alpha\vee \gamma, \beta\vee \gamma, e, e\vee a}\]
is a complete sublattice of the underlying lattice of $\alg{Q}$, $e$ is isolated in $S$, and finally, $\alpha$ and $\beta$ are atoms in $S$.

In general $S$ is not a subquantale of $\alg{Q}$, but it is always a strictly nondistributive lattice. In contrast to Section~\ref{sec:1} we now assume a  quantale structure on $S$ such that the element $e$ plays the role of an isolated unit. Since $e\vartriangleleft e$ holds in $S$, every unital quantale on $S$ with $e$ as unit is a unitally nondistributive quantale. The aim of the following considerations is to give a characterization of such quantales on $S$.

\begin{proposition} \label{newnewprop6} Let $\alg{Q}$ be a unital quantale on $S$ with unit $e$ such that $e^-=\gamma$ and $e^+=e\vee a$.  Then the relation $a\ast a\le a$ holds, i.e.\ $P=\set{\bot,\alpha,\beta,\gamma, \alpha\vee \gamma, \beta\vee\gamma, a}$ is a subquantale of $\alg{Q}$. Moreover $P$ satisfies the conditions {\rm(\ref{property(a)})} and 
{\rm(\ref{property(aa)})}.
\end{proposition}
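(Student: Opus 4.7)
The plan is to split the proof into two parts: the main computation $a\ast a\le a$, from which the subquantale claim is immediate, and the verification of (\ref{property(a)}) and (\ref{property(aa)}) for $P$, which will be a direct appeal to Lemma~\ref{newlem4}.

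For the central inequality $a\ast a\le a$, I would argue by contradiction. In $S$ the only elements not dominated by $a$ are $e$ and $e\vee a$ (both lying above $e$), so assuming $a\ast a\not\le a$ forces $a\ast a\ge e$. Now Remarks~\ref{newremarks1}\,\ref{newremarks1.2} applied to $S=\overline{P}^\gamma$ tells us that the isolated unit $e$ is completely join-prime. Expanding
\[
a\ast a=(\alpha\vee\beta)\ast(\alpha\vee\beta)=\tbigvee_{\mu,\nu\in\{\alpha,\beta\}}\mu\ast\nu,
\]
complete join-primeness of $e$ yields some pair $(\mu,\nu)\in\{\alpha,\beta\}^2$ with $e\le\mu\ast\nu$; since in addition $\mu\ast\nu\le a\ast a\le e\vee a$ and the interval $[e,e\vee a]$ in $S$ contains only $e$ and $e\vee a$, we must have $\mu\ast\nu\in\set{e,e\vee a}$.

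The contradiction then comes from evaluating $(\mu\ast\nu)\ast\gamma$ two ways. On the right, $\nu\ast\gamma\le\nu\ast e=\nu$, and in $S$ the only elements below an atom $\nu\in\{\alpha,\beta\}$ are $\bot$ and $\nu$ itself, so $\nu\ast\gamma\in\{\bot,\nu\}$ and therefore $\mu\ast(\nu\ast\gamma)\in\{\bot,\mu\ast\nu\}$. On the other hand $(\mu\ast\nu)\ast\gamma\ge e\ast\gamma=\gamma\ne\bot$, so associativity forces $(\mu\ast\nu)\ast\gamma=\mu\ast\nu$. Now the two possibilities for $\mu\ast\nu$ are both impossible: if $\mu\ast\nu=e$, then $\mu\ast\nu=e\ast\gamma=\gamma$, contradicting $e\ne\gamma$; and if $\mu\ast\nu=e\vee a$, then
\[
e\vee a=(e\vee a)\ast\gamma=\gamma\vee(a\ast\gamma)\le\gamma\vee a=a,
\]
contradicting $e\not\le a$. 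Hence $a\ast a\le a$, and since every element of $P$ is $\le a$, $P$ is closed under $\ast$, i.e.\ a subquantale of $\alg{Q}$.

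The remaining two conditions are a transcription of Lemma~\ref{newlem4}. For (\ref{property(a)}): from $\gamma\le e$ we get $\gamma\ast\delta\le e\ast\delta=\delta$ and $\delta\ast\gamma\le\delta$ for every $\delta\in P$, which is precisely Lemma~\ref{newlem4}\,\ref{newlem4(1)}. For (\ref{property(aa)}): apply Lemma~\ref{newlem4}\,\ref{newlem4(2)} to $\alg{Q}$ on $S=\overline{P}^\gamma$. For any $\beta'\in P$ with $\beta'\not\le\gamma$ and any $\delta\in P$ we then have $\overline{\top}\ast\delta=(\beta'\ast\delta)\vee\delta$ in $\alg{Q}$; since $\overline{\top}=e\vee a$ and $e\ast\delta=\delta$, the left-hand side equals $\delta\vee(a\ast\delta)$, so $a\ast\delta\le(\beta'\ast\delta)\vee\delta$, which is (\ref{property(aa)}) for $P$ (whose top is $a$). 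The right-hand inequality is symmetric.

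The main obstacle is the inequality $a\ast a\le a$: routine order considerations narrow $a\ast a$ to the set $\set{a\text{-things},e,e\vee a}$, but ruling out the two possibilities $e$ and $e\vee a$ requires genuinely using the algebraic structure, and the decisive trick is combining the complete join-primeness of the isolated unit $e$ (to locate an offending product $\mu\ast\nu$) with an associativity calculation involving $\gamma=e^-$ (which, being $\le e$, cannot push $\mu\ast\nu$ anywhere except back to itself, yet must keep it below $a$).
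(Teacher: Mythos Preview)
Your proof is correct. The verification of (\ref{property(a)}) and (\ref{property(aa)}) via Lemma~\ref{newlem4}\,\ref{newlem4(1)},\,\ref{newlem4(2)} matches the paper's argument essentially verbatim. For the main inequality $a\ast a\le a$, however, you take a somewhat different route. The paper splits into cases according to the lattice structure of $P$: in the case $\alpha\vee\gamma=a$ (or $\beta\vee\gamma=a$) it first deduces $e\le\alpha\ast\alpha$ from $a\ast a\le(\alpha\ast\alpha)\vee a$, shows $(e\vee a)\ast\alpha=e\vee a$, and then cases on $\alpha\ast\gamma\in\{\bot,\alpha\}$; in the remaining case it uses Lemma~\ref{newlem4}\,\ref{newlem4(4)} to obtain $a\ast\gamma=\bot$ and concludes directly. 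Your argument is uniform: you apply the complete join-primeness of $e$ to the full expansion $a\ast a=\tbigvee_{\mu,\nu}\mu\ast\nu$ to locate a single product $\mu\ast\nu\in\{e,e\vee a\}$, and then the associativity calculation with $\gamma$ (using only that $\nu$ is an atom and $\gamma\ne\bot$) dispatches both possibilities at once, without any reference to whether $\gamma$ is an atom. The underlying tools are the same---$e\vartriangleleft e$ and multiplication by $\gamma=e^-$---but your packaging avoids the case distinction on $P$ and is a bit more economical.
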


\begin{proof} Since $e^-=\gamma$ and $e^+=e\vee a$, Lemma~\ref{newnewlemma1.1C} (see also Comment~\ref{newnewcomment2}\,(a)) points out that $S$ has the form $S=\overline{P}^\gamma=P\cup\set{e,e\vee a}$. 

Now we assume $a\ast a\not\le a$. Then  $a\ast a\in \set{e,e\vee a}$, this means:
\[e\le  a\ast a= (\alpha \ast \alpha) \vee (\alpha \ast \beta)\vee (\beta \ast \alpha) \vee (\beta \ast \beta).\]
Since $e \vartriangleleft e$, there exists $x,y\in \set{\alpha,\beta}$ satisfying the property $e\le x\ast y$. If $x\ast y=e$, first we notice that
the right quantale multiplication by $x$ is an injective, join-preserving self-map of $\alg{Q}$ and consequently an order automorphism, since $\alg{Q}$ is finite.  In particular there exists $z\in \alg{Q}$ such that $z\ast x=e$, and $x\ast y=e$ implies $z=y$ --- i.e.\ this order automorphism sends $y$ to $e$. Since $y$ is an atom and $e$ is not an atom, this cannot happen.
Hence we have $x\ast y=a \vee e$. Since $\gamma\le e$ and $x$ is an atom, we obtain ${\gamma\ast x\in \set{\bot,x}}$. If $\gamma \ast x= \bot$, then $\gamma=\gamma \ast e\le \gamma \ast x \ast y= \bot$ follows, a contradiction. If $\gamma \ast x=x$, then 
\[ e\le x\ast y =\gamma \ast x \ast y=\gamma \ast (a \vee e)=(\gamma \ast a) \vee \gamma\le(e \ast a) \vee a = a \] 
follows. Hence we obtain again a contradiction and
   the assumption at the beginning is false  --- i.e.\  $a \ast a \le a$ holds.\\
Finally, (\ref{property(a)})  follows from $\gamma\le e$.  With regard to (\ref{property(aa)}) we 
choose $\delta\in P$ with $\delta\not\le \gamma$. Since $e^-=\gamma$ and $e^+=e\vee a$, we conclude $\delta\not\le  e$, which implies $\delta \vee e=e\vee a$. Hence for $\lambda\in P$ we obtain $a\ast\lambda\le (e\vee a)\ast \lambda=(\delta \vee e)\ast \lambda=(\delta\ast \lambda)\vee \lambda$.
Analogously, we verify $\lambda\ast a\le (\lambda\ast \delta)\vee \lambda$. 
\end{proof}

%%%%%%%%%%%%%%%%%%%%%%%%%%%%%%%%%%%%%%%%%%%%%%%%%%%%%%%%%%%%%%%%%%%%%%%%%%%%%%%%%
\pagebreak

 As an immediate corollary of Theorem~{\ref{newthm2}  and Proposition~\ref{newnewprop6} we obtain the following:
 
 \begin{result}\label{result} Every unital quantale on $S$ with isolated  unit $e$ satisfying $e^-=\gamma$ and $e^+=e\vee a$ is an extension of a quantale on $P$ provided with properties $($\ref{property(a)}$)$ and $($\ref{property(aa)}$)$.
 \end{result}

In the following considerations we will treat different versions of $S$ depending on the values of $\alpha\vee \gamma$ and $\beta\vee \gamma$.
%%%%%%%%%%%%%%%%%%%%%%%%%%%%%%%%%%%%%%%%%%%%%%%%%%%%%%%%%%%%%%%%%%%%%%%%%%%%%%%%%%%%%%%%%%%%%%%%%%%%%%%%%%%%%%%%%%%%%%%%%%%%%%%%%%%%%%%%%%%%%%%%%%%%%%%%%%%%%%

\subsection{Unitally nondistributive quantales on a complete lattice with $7$ elements}
\label{subsec:3.1}
As a first step we consider the case  $\alpha\vee \gamma=a=\beta\vee \gamma$. Then  $\alpha\not\le \gamma$ and $\beta\not\le \gamma$  follow, and $\gamma$ is a third atom in $S$.  In this situation the lattice-structure of $S$ is characterized by the following Hasse diagram:
 \stepcounter{num}
\begin{equation}\label{n3.1C}
         \begin{tikzpicture}[x=7mm,y=7mm,baseline={([yshift=-.5ex]current bounding box.center)},vertex/.style={anchor=base}]
         \draw (1,0)--(2,1)--(1,2)--(1,3);
         \draw (1,0)--(0,1)--(1,2);
         \draw (1,0)--(1,1)--(1,2);
         \draw (1,1)--(2,2.)--(1,3);
         \draw[white,line width=4pt] (2,1.)--(1,2.);
         \draw (2,1)--(1,2.);
         \mycircle{2,2.};
         \node[right] at (2,2.) {$e$};
         \mycircle{1,0};
         \node[below] at (1,0) {$\bot$};
         \mycircle{0,1};
         \node[left] at (0,1) {$\alpha$};
         \mycircle{1,1};
         \node[left] at (1,1) {$\gamma$};
         \mycircle{2,1};
         \node[right] at (2,1) {$\beta$};
         \mycircle{1,2};
         \node[left] at (1,2) {$a$};
         \mycircle{1,3};
         \node[above] at (1,3) {$e \vee a$}; 
         \node[left] at (0,0) {$S$}; 
         \end{tikzpicture}
         \qquad
         \begin{tikzpicture}[x=7mm,y=7mm,baseline={([yshift=-.5ex]current bounding box.center)},vertex/.style={anchor=base}]
         \draw (1,0)--(2,1)--(1,2);
         \draw (1,0)--(0,1)--(1,2);
         \draw (1,0)--(1,1)--(1,2);
         \mycircle{1,0};
         \node[below] at (1,0) {$\bot$};
         \mycircle{0,1};
         \node[left] at (0,1) {$\alpha$};
         \mycircle{1,1};
         \node[left] at (1,1) {$\gamma$};
         \mycircle{2,1};
         \node[right] at (2,1) {$\beta$};
         \mycircle{1,2};
         \node[above] at (1,2) {$a$};
         \node[above,white] at (1,3) {$e \vee a$}; 
         \node[left] at (0,0) {$P$}; 
         \end{tikzpicture}
\end{equation}
%\vskip-5pt 
\noindent 
and $P$ is the diamond with $3$ atoms. 
Since $\alpha\vee\gamma=\beta\vee\gamma=a\vee \gamma=a$, we conclude from Remarks~\ref{newremarkCC}\,(2) and Result~\ref{result} that every quantale on $S$ with unit $e$ is an extension of a quantale  by an isolated unit on  the diamond ${\mathsf{M}}_3$ satisfying (\ref{property(a)}). 

In the second step we consider the case $\alpha\le \gamma$, i.e.\ $\alpha\vee \gamma=\gamma$. Since $\alpha\vee \beta=a$, the relation ${\beta\vee \gamma}=a$ follows, and consequently we have $\beta\not\le \gamma$.  Hence the lattice-structure of $S$ is characterized by the following Hasse diagram:
 \stepcounter{num}
 %   \vskip-5pt 
\begin{equation}\label{S71}
         \begin{tikzpicture}[x=7mm,y=7mm,baseline={([yshift=-.5ex]current bounding box.center)},vertex/.style={anchor=base}]
         \draw (1,0)--(0,0.5)--(0,1.5)--(1,2)--(1,3);
         \draw (1,0)--(2,1)--(1,2);
         \draw (0,1.5)--(0,2.5)--(1,3);
         \draw (0,1.5)--(1,2.);
         \mycircle{0,2.5};
         \node[left] at (0,2.5) {$e$};
         \mycircle{1,0};
         \node[below] at (1,0) {$\bot$};
         \mycircle{2,1};
         \node[right] at (2,1) {$\beta$};
         \mycircle{0,1.5};
         \node[left] at (0,1.5) {$\gamma$};
         \mycircle{0,0.5};
         \node[left] at (0,0.5) {$\alpha$};
         \mycircle{1,2};
         \node[right] at (1,2) {$a$};
         \mycircle{1,3};
         \node[above] at (1,3) {$e\vee a$};
         \node[right] at (2,0) {$S$}; 
         \end{tikzpicture}
         \qquad
         \begin{tikzpicture}[x=7mm,y=7mm,baseline={([yshift=-.5ex]current bounding box.center)},vertex/.style={anchor=base}]
         \draw (1,0)--(0,0.5)--(0,1.5)--(1,2);
         \draw (1,0)--(2,1)--(1,2);
         \draw (0,1.5)--(1,2.);
         \mycircle{1,0};
         \node[below] at (1,0) {$\bot$};
         \mycircle{2,1};
         \node[right] at (2,1) {$\beta$};
         \mycircle{0,1.5};
         \node[left] at (0,1.5) {$\gamma$};
         \mycircle{0,0.5};
         \node[left] at (0,0.5) {$\alpha$};
         \mycircle{1,2};
         \node[above] at (1,2) {$a$};
         \node[above,white] at (1,3) {$e\vee a$};
         \node[right] at (2,0) {$P$}; 
         \end{tikzpicture}
\end{equation}
%\vskip-5pt 
\noindent and $P$ is  the pentagon.  Since $\beta\vee\gamma=a\vee \gamma=a$, we conclude again from Remarks~\ref{newremarkCC}\,(2) and Result~\ref{result} that every quantale on $S$ with unit $e$ is an extension of a quantale  by an isolated unit on  the pentagon ${\mathsf{N}}_5$ satisfying (\ref{property(a)}). 

 We can summarize the previous observations as follows.
 
  \begin{corollary} \label{newcor2} Every unitally non-distributive quantale on a complete lattice with $7$ elements is the extension of a quantale provided with property \textup(\ref{property(a)}\textup)  either on the diamond ${\mathsf{M}}_3$ or on the pentagon ${\mathsf{N}}_5$.
\end{corollary}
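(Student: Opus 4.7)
The strategy is to combine Proposition~\ref{newprop3}, Corollary~\ref{newcor1}, the preceding Result~\ref{result}, and the case analysis that already appeared in the proof of Theorem~\ref{newthm1}.

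First, I would invoke Proposition~\ref{newprop3} on the (finite) unitally nondistributive quantale $\alg{Q}$ to produce $\alpha,\beta\in\alg{Q}$ for which the triple $\{\alpha,\beta,e\}$ witnesses strict nondistributivity via (\ref{n.1C})\textemdash so $e$ plays the role of the distinguished element $x$ in (\ref{strictly nondistributive}). Since the underlying lattice has exactly seven elements, Corollary~\ref{newcor1} already forces it to be isomorphic to $\overline{\mathsf{N}}_5$ or to $\overline{\mathsf{M}}_3$; what still has to be established is that $e$ sits inside it in the ``isolated'' position described in Lemma~\ref{newnewlemma1.1C}.

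To locate $e$ I would set $a:=\alpha\vee\beta$ and $\gamma:=a\wedge e$, and observe that the $9$-term sublattice
$S=\{\bot,\alpha,\beta,a,\gamma,\alpha\vee\gamma,\beta\vee\gamma,e,e\vee a\}$
constructed in the proof of Theorem~\ref{newthm1} must coincide with the whole lattice of $\alg{Q}$, since $|S|\ge 7$. Running through the case distinction of that proof with $x=e$, the only cases compatible with $|S|=7$ are Case~1 (or its symmetric variant), which reproduces diagram (\ref{S71}) with $P:=\{\bot,\alpha,\beta,\gamma,a\}\cong\mathsf{N}_5$, and Case~3, which reproduces (\ref{n3.1C}) with $P\cong\mathsf{M}_3$. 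Cases~2.1,~2.2,~4.1 and~4.2 each contribute a strictly larger $S$ (yielding the $8$-element $\overline{\mathsf{L}}_6$, the $9$-element $\overline{\mathsf{L}}_7$, or the explicit $10$- and $11$-element diagrams of Cases~2.1 and~4.2) and are therefore excluded. This case reduction is in fact the only non-routine step of the argument, but the Hasse diagrams drawn in the proof of Theorem~\ref{newthm1} carry it out already.

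Finally, in both surviving cases Comment~\ref{newnewcomment2} applies to give $e^-=\gamma$ and $e^+=e\vee a$. Result~\ref{result} then identifies $\alg{Q}$ with the extension of a quantale on $P$ (either $\mathsf{N}_5$ or $\mathsf{M}_3$) satisfying property (\ref{property(a)}); property (\ref{property(aa)}) is not listed in the conclusion because it is automatic for these two candidates of $P$\textemdash every $\delta\in P$ with $\delta\not\le\gamma$ satisfies $\delta\vee\gamma=a$, the top of $P$, so Remarks~\ref{newremarkCC}\,(2) furnishes (\ref{property(aa)}) for free. This is exactly the statement of the corollary.
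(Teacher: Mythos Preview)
Your proposal is correct and follows essentially the same route as the paper: the case analysis on $\alpha\vee\gamma$ and $\beta\vee\gamma$ carried out in Subsection~\ref{subsec:3.1} (which is nothing but Cases~1 and~3 of the proof of Theorem~\ref{newthm1}) combined with Result~\ref{result} and Remarks~\ref{newremarkCC}\,(2). Two small slips do not affect the argument: the sublattice $S$ in the proof of Theorem~\ref{newthm1} has $11$ listed terms, not $9$ (the $9$-term list you quote is the one introduced at the beginning of Section~\ref{sec:3}, \emph{after} $e^-=\gamma$ and $e^+=e\vee a$ have been assumed), and the diagram in Case~2.1 has $9$ elements rather than $10$---but in every one of Cases~2.1, 2.2, 4.1, 4.2 one still has $|S|>7$, which is all you need.
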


This is the reason why we are now interested in providing the classification of all quantales satisfying property (\ref{property(a)}) on the diamond ${\mathsf{M}}_3$ and on the pentagon ${\mathsf{N}}_5$. For the convenience of the reader we recall the notation
\[ {\mathsf{M}}=\set{\bot,\alpha,\beta, \gamma,\top} \quad \text{and}\quad {\mathsf{N}}_5=\set{ \bot,\alpha,\beta,\gamma,\top} \qquad \text{(cf.\ (\ref{n3.1C}) {\rm and} (\ref{S71}))}\]
and  start with the following:

\begin{lemma} \label{lemma4.4} Let $\alg{Q}$ be a quantale on the diamond ${\mathsf{M}}_3$ or on the pentagon ${\mathsf{N}}_5$ with $\alpha$, $\beta$ and $\gamma$ as in Figures~\textup(\ref{n3.1C}\textup) and \textup(\ref{S71}\textup). If $\gamma\in \alg{Q}$ satisfies the property \textup(\ref{property(a)}\textup), then $\alg{Q}$ is unital if and only if  $\gamma\ast \gamma=\gamma$.
\end{lemma}

\begin{proof} We first note that if $\alg{Q}=(\alg{Q},\ast,\delta)$ is a unital quantale with unit $\delta$, then (\ref{property(a)}) implies $\gamma\le \delta$ --- i.e. $\delta\in\set{\gamma,\top}$ (see Remarks~\ref{newremarkCC}\,(1)). Moreover, if $\delta=\top$, then the lattice structure of ${\mathsf{M}}_3$ and ${\mathsf{N}}_5$ implies
$\gamma=\top\ast\gamma=(\alpha\ast\gamma)\vee(\beta\ast\gamma)\le {(\alpha\wedge\gamma)\vee(\beta\wedge\gamma)}\le \alpha$, 
which  is a contradiction to $\gamma\not\le \alpha$. 
Hence $\gamma$ is necessarily the unit of $\alg{Q}$ and, in particular,
$\gamma\ast \gamma=\gamma$. Conversely, if $\gamma\ast \gamma=\gamma$, then 
$\gamma\le\top\ast \gamma= (\alpha\ast \gamma)\vee (\beta\ast \gamma)$ and $\gamma\le\gamma\ast \top= (\gamma\ast \alpha)\vee (\gamma\ast \beta)$. Hence (\ref{property(a)}) implies $\alpha\ast \gamma\neq \bot$, $\beta\ast \gamma\neq \bot$, $\gamma\ast \alpha\neq \bot$ and  $\gamma\ast \beta \neq\bot$. Since $\alpha$ and $\beta$ are atoms, we use again (\ref{property(a)}) and conclude that $\gamma$ is the unit of $\alg{Q}$.
\end{proof}

\begin{example}\label{example2} Let $\alg{Q}$ be a quantale on the diamond ${\mathsf{M}}_3=\set{\bot,\alpha,\beta,\gamma,\top}$ and assume that the atom $\gamma$ satisfies (\ref{property(a)}). Then we distinguish the following cases:
\vskip3pt

\noindent
{\sf Case 1.} (Unital quantales) Referring to Lemma~\ref{lemma4.4}, it is easily seen that there exist $8$ non-isomorphic quantales  on ${\mathsf{M}}_3$ with unit  $\gamma$ %, namely %\cite[5.2.11,\,5.2.12,\,5.2.19,\, 5.2.22,\, 5.2.23,\,5.2.42,\,5.2.43,\,5.2.44]{Catalogue} 
(see Figure~\ref{fig:M3.1}). 
The unital quantale 5.2.42 is the quantale induced by the cyclic group with $3$  elements (cf.\ Example~\ref{examples1}\,(2)).
\begin{figure}[H] 
         \vskip-0pt
{\setlength\tabcolsep{1pt}\begin{tabular}{llllllll}
{\scriptsize\setlength\tabcolsep{0pt}
         \begin{tabular}{l}
         $\alpha\ast\alpha=\bot$\\
         $\beta\ast\beta=\gamma$\\
         \hbox to 35pt{\cite[5.2.11]{Catalogue}\hfill}\\
         \end{tabular}}
 &
     {\tiny\setlength\tabcolsep{2pt}
     \begin{tabular}{c|c|c|c|c}
$\ast$ &$\alpha$ & $\beta$ & $\gamma$ &  $\top$\\
\hline
$\alpha$ & $\bot$ & $\alpha$ & $\alpha$ &$\alpha$\\
\hline
$\beta$ &$\alpha$  & $\gamma$ & $\beta$ & $\top$\\
\hline
$\gamma$  & $\alpha$ & $\beta$ & $\gamma$ & $\top$\\
\hline
$\top$ &$\alpha$ & $\top$ & $\top$ & $\top$\\
	\end{tabular}}
         &
         {\scriptsize\setlength\tabcolsep{0pt}
         \begin{tabular}{l}
         $\alpha\ast\alpha=\bot$\\
         $\beta\ast\beta=\top$\\
         \hbox to 35pt{\cite[5.2.12]{Catalogue}\hfill}\\
         \end{tabular}}
 &
     {\tiny\setlength\tabcolsep{2pt}\begin{tabular}{c|c|c|c|c}
$\ast$ &$\alpha$ & $\beta$ & $\gamma$ &  $\top$\\
\hline
$\alpha$ & $\bot$ & $\alpha$ & $\alpha$ &$\alpha$\\
\hline
$\beta$ &$\alpha$  & $\top$ & $\beta$ & $\top$\\
\hline
$\gamma$  & $\alpha$ & $\beta$ & $\gamma$ & $\top$\\
\hline
$\top$ &$\alpha$ & $\top$ & $\top$ & $\top$\\
\end{tabular}}
         &
         {\scriptsize\setlength\tabcolsep{0pt}
         \begin{tabular}{l}
         $\alpha\ast\alpha=\alpha$\\
         $\beta\ast\beta=\beta$\\
         \hbox to 35pt{\cite[5.2.19]{Catalogue}\hfill}\\
         \end{tabular}}
 &
     {\tiny\setlength\tabcolsep{2pt}\begin{tabular}{c|c|c|c|c}
$\ast$ &$\alpha$ & $\beta$ & $\gamma$ &  $\top$\\
\hline
$\alpha$ & $\alpha$ & $\bot$ & $\alpha$ &$\alpha$\\
\hline
$\beta$ &$\bot$  & $\beta$ & $\beta$ & $\top$\\
\hline
$\gamma$  & $\alpha$ & $\beta$ & $\gamma$ & $\top$\\
\hline
$\top$ &$\alpha$ & $\top$ & $\top$ & $\top$\\
\end{tabular}}
       &
         {\scriptsize\setlength\tabcolsep{0pt}
         \begin{tabular}{l}
         $\alpha\ast\alpha=\alpha$\\
          $\beta\ast\beta=\gamma$\\
         \hbox to 35pt{\cite[5.2.22]{Catalogue}\hfill}\\
          \end{tabular}}
 &
     {\tiny\setlength\tabcolsep{2pt}\begin{tabular}{c|c|c|c|c}
$\ast$ &$\alpha$ & $\beta$ & $\gamma$ &  $\top$\\
\hline
$\alpha$ & $\alpha$ & $\alpha$ & $\alpha$ &$\alpha$\\
\hline
$\beta$ &$\alpha$  & $\gamma$ & $\beta$ & $\top$\\
\hline
$\gamma$  & $\alpha$ & $\beta$ & $\gamma$ & $\top$\\
\hline
$\top$ &$\alpha$ & $\top$ & $\top$ & $\top$\\
\end{tabular}}
\\[25pt]
\noalign{\smallskip}
         {\scriptsize\setlength\tabcolsep{0pt}
         \begin{tabular}{l}
         $\alpha\ast\alpha=\alpha$\\
          $\beta\ast\beta=\top$\\
         \hbox to 35pt{\cite[5.2.23]{Catalogue}\hfill}\\
          \end{tabular}}
 &
     {\tiny\setlength\tabcolsep{2pt}\begin{tabular}{c|c|c|c|c}
$\ast$ &$\alpha$ & $\beta$ & $\gamma$ &  $\top$\\
\hline
$\alpha$ & $\alpha$ & $\alpha$ & $\alpha$ &$\alpha$\\
\hline
$\beta$ &$\alpha$  & $\top$ & $\beta$ & $\top$\\
\hline
$\gamma$  & $\alpha$ & $\beta$ & $\gamma$ & $\top$\\
\hline
$\top$ &$\alpha$ & $\top$ & $\top$ & $\top$\\
\end{tabular}}
         &
         {\scriptsize\setlength\tabcolsep{0pt}
         \begin{tabular}{l}
         $\alpha\ast\alpha=\beta$\\
          $\beta\ast\beta=\alpha$\\
         \hbox to 35pt{\cite[5.2.42]{Catalogue}\hfill}\\
         \end{tabular}}
 &
     {\tiny\setlength\tabcolsep{2pt}\begin{tabular}{c|c|c|c|c}
$\ast$ &$\alpha$ & $\beta$ & $\gamma$ &  $\top$\\
\hline
$\alpha$ & $\beta$ & $\gamma$ & $\alpha$ &$\top$\\
\hline
$\beta$ &$\gamma$  & $\alpha$ & $\beta$ & $\top$\\
\hline
$\gamma$  & $\alpha$ & $\beta$ & $\gamma$ & $\top$\\
\hline
$\top$ &$\top$ & $\top$ & $\top$ & $\top$\\
 \end{tabular}}
         &
         {\scriptsize\setlength\tabcolsep{0pt}\begin{tabular}{l}
         $\alpha\ast\alpha=\beta$\\
          $\beta\ast\beta=\top$\\
         \hbox to 35pt{\cite[5.2.43]{Catalogue}\hfill}\\
         \end{tabular}}
 &
     {\tiny\setlength\tabcolsep{2pt}\begin{tabular}{c|c|c|c|c}
$\ast$ &$\alpha$ & $\beta$ & $\gamma$ &  $\top$\\
\hline
$\alpha$ & $\beta$ & $\top$ & $\alpha$ &$\top$\\
\hline
$\beta$ &$\top$  & $\top$ & $\beta$ & $\top$\\
\hline
$\gamma$  & $\alpha$ & $\beta$ & $\gamma$ & $\top$\\
\hline
$\top$ &$\top$ & $\top$ & $\top$ & $\top$\\
\end{tabular}}
&
         \hskip-2pt
         {\scriptsize\setlength\tabcolsep{0pt}
         \begin{tabular}{l}
         $\alpha\ast\alpha=\alpha$\\
          $\beta\ast\beta=\gamma$\\
         \hbox to 35pt{\cite[5.2.22]{Catalogue}\hfill}\\
          \end{tabular}}
 &
     {\tiny\setlength\tabcolsep{2pt}\begin{tabular}{c|c|c|c|c}
$\ast$ &$\alpha$ & $\beta$ & $\gamma$ &  $\top$\\
\hline
$\alpha$ & $\alpha$ & $\alpha$ & $\alpha$ &$\alpha$\\
\hline
$\beta$ &$\alpha$  & $\gamma$ & $\beta$ & $\top$\\
\hline
$\gamma$  & $\alpha$ & $\beta$ & $\gamma$ & $\top$\\
\hline
$\top$ &$\alpha$ & $\top$ & $\top$ & $\top$\\
\end{tabular}}
\end{tabular}
}
\vskip-5pt 
 \caption{${\mathsf{M}}_3$ {\sf Case 1.}  (Unital quantales) $\gamma\ast \gamma=\gamma$. }
          \label{fig:M3.1}
         \vskip-5pt
        \end{figure}
         
\noindent{\sf Case 2.}  (Non-unital quantales) If $\alg{Q}$ is non-unital, then $\gamma\ast \gamma=\bot$. Hence 
\[\top\ast\gamma={(\beta\vee \gamma)\ast \gamma}= \beta \ast \gamma\le \beta\quad\text{and}\quad\top\ast\gamma=(\alpha\vee \gamma)\ast \gamma= \alpha \ast \gamma\le \alpha,\]
 and consequently $\top\ast \gamma=\bot$ follows. Analogously we can prove $\gamma \ast \top=\bot$. If now $x\in \set{\alpha,\beta}$, then we observe: 
 \[\top\ast \top=x\ast x= x\ast \top=\top\ast x=\alpha\ast \beta=\beta\ast \alpha.\]
  Hence the restriction of the quantale multiplication to $\set{\alpha,\beta,\top}$ is constant and is represented by an unique element of ${\mathsf{M}}_3$. Therefore there exist $4$ non-isomorphic and non-unital quantales on ${\mathsf{M}}_3$ satisfying (\ref{property(a)}) %(cf.\ \cite[5.2.1 -- 5.2.4]{Catalogue}) 
(see Figure~\ref{fig:M3.2}).
 Among them there is the trivial quantale on ${\mathsf{M}}_3$ (cf.\ Example~\ref{examples1}\,(1)).
\begin{figure}[H] 
         \vskip-0pt
\centering
{\setlength\tabcolsep{1pt}\begin{tabular}{llllllll}
        {\scriptsize\setlength\tabcolsep{3pt}
         \begin{tabular}{l}
         $\alpha\ast\alpha=\bot$\\
         \cite[5.2.1]{Catalogue}\\
         \end{tabular}}
 &
{\tiny\setlength\tabcolsep{2pt}\begin{tabular}{c|c|c|c|c}
$\ast$ &$\alpha$ & $\beta$ & $\gamma$ &  $\top$\\
\hline
$\alpha$ & $\bot$ & $\bot$ & $\bot$ &$\bot$\\
\hline
$\beta$ &$\bot$  & $\bot$ & $\bot$ & $\bot$\\
\hline
$\gamma$  & $\bot$ & $\bot$ & $\bot$ & $\bot$\\
\hline
$\top$ &$\bot$ & $\bot$ & $\bot$ & $\bot$\\
\end{tabular}}
&
         {\scriptsize\setlength\tabcolsep{3pt}\begin{tabular}{l}
         $\alpha\ast\alpha=\alpha$\\
         \cite[5.2.3]{Catalogue}\\
         \end{tabular}}  
 &
           {\tiny\setlength\tabcolsep{2pt}
           \begin{tabular}{c|c|c|c|c}
$\ast$ &$\alpha$ & $\beta$ & $\gamma$ &  $\top$\\
\hline
$\alpha$ & $\alpha$ & $\alpha$ & $\bot$ &$\alpha$\\
\hline
$\beta$ &$\alpha$  & $\alpha$ & $\bot$ & $\alpha$\\
\hline
$\gamma$  & $\bot$ & $\bot$ & $\bot$ & $\bot$\\
\hline
$\top$ &$\alpha$ & $\alpha$ & $\bot$ & $\alpha$\\
\end{tabular}}
&
         {\scriptsize\setlength\tabcolsep{3pt}\begin{tabular}{l}
         $\alpha\ast\alpha=\gamma$\\
         \cite[5.2.2]{Catalogue}\\
         \end{tabular}}
 &
         {\tiny\setlength\tabcolsep{2pt}\begin{tabular}{c|c|c|c|c}
$\ast$ &$\alpha$ & $\beta$ & $\gamma$ &  $\top$\\
\hline
$\alpha$ & $\gamma$ & $\gamma$ & $\bot$ &$\gamma$\\
\hline
$\beta$ &$\gamma$  & $\gamma$ & $\bot$ & $\gamma$\\
\hline
$\gamma$  & $\bot$ & $\bot$ & $\bot$ & $\bot$\\
\hline
$\top$ &$\gamma$ & $\gamma$ & $\bot$ & $\gamma$\\
\end{tabular}}
&
{\scriptsize\setlength\tabcolsep{3pt}\begin{tabular}{l}
         $\alpha\ast\alpha=\top$\\
         \cite[5.2.4]{Catalogue}\\
         \end{tabular}}
 &
         {\tiny\setlength\tabcolsep{2pt}\begin{tabular}{c|c|c|c|c}
$\ast$ &$\alpha$ & $\beta$ & $\gamma$ &  $\top$\\
\hline
$\alpha$ & $\top$ & $\top$ & $\bot$ &$\top$\\
\hline
$\beta$ &$\top$  & $\top$ & $\bot$ & $\top$\\
\hline
$\gamma$  & $\bot$ & $\bot$ & $\bot$ & $\bot$\\
\hline
$\top$ &$\top$ & $\top$ & $\bot$ & $\top$\\
\end{tabular}
}
\end{tabular}}
\vskip-5pt 
\caption{${\mathsf{M}}_3$ {\sf Case 2.}  (Non-unital quantales) $\gamma\ast \gamma=\bot$. }
          \label{fig:M3.2}
         \vskip-5pt
         \end{figure}

As a concluding remark we point out that  there exist $12$ non-isomorphic, unitally nondistributive quantales on the extended diamond.
\end{example}

\begin{example}\label{example3} Let $\alg{Q}$ be a quantale on the pentagon $\mathsf{N}_5=\set{\bot,\alpha,\beta,\gamma,\top}$ with $\alpha\le \gamma$, and $\alpha\vee \beta=\top$ such that $\gamma$ satisfies (\ref{property(a)}). In particular, $\alpha$ and $\beta$ are atoms and $\alpha \neq \gamma$ holds. Then we distinguish the following cases:
\vskip3pt

\noindent
{\sf Case 1.} (Unital quantales) Referring again to Lemma~\ref{lemma4.4}, it is easily seen that there exist $5$ non-isomorphic quantales  on ${\mathsf{N}}_5$ with unit  $\gamma$ %, namely \cite[5.3.17,\, 5.3.35,\,5.3.42,\, 5.3.184,\, 5.3.229]{Catalogue} 
(see Figure~\ref{fig:N5.1}).
\begin{figure}[H] 
         \vskip-5pt
\centering
{\setlength\tabcolsep{3pt}\begin{tabular}{llllll}
{\scriptsize\setlength\tabcolsep{3pt}
         \begin{tabular}{l}
         $\beta\ast\beta=\bot$\\
         \cite[5.3.17]{Catalogue}\\
         \end{tabular}}
&
         {\scriptsize\setlength\tabcolsep{3pt}\begin{tabular}{c|c|c|c|c}
$\ast$ &$\alpha$ & $\beta$ & $\gamma$ &  $\top$\\
\hline
$\alpha$ & $\alpha$ & $\beta$ & $\alpha$ &$\top$\\
\hline
$\beta$ &$\beta$  & $\bot$ & $\beta$ & $\beta$\\
\hline
$\gamma$  & $\alpha$ & $\beta$ & $\gamma$ & $\top$\\
\hline
$\top$ &$\top$ & $\beta$ & $\top$ & $\top$\\
\end{tabular}}
&
         {\scriptsize\setlength\tabcolsep{3pt}\begin{tabular}{l}
         $\beta\ast\beta=\beta$\\
         $\alpha\ast\beta=\bot$\\
         \cite[5.3.35]{Catalogue}
         \end{tabular}}
&
        {\scriptsize\setlength\tabcolsep{2pt}\begin{tabular}{c|c|c|c|c}
$\ast$ &$\alpha$ & $\beta$ & $\gamma$ &  $\top$\\
\hline
$\alpha$ & $\alpha$ & $\bot$ & $\alpha$ &$\alpha$\\
\hline
$\beta$ &$\bot$  & $\beta$ & $\beta$ & $\beta$\\
\hline
$\gamma$  & $\alpha$ & $\beta$ & $\gamma$ & $\top$\\
\hline
$\top$ &$\alpha$ & $\beta$ & $\top$ & $\top$\\
\end{tabular}}
&
         {\scriptsize\setlength\tabcolsep{3pt}\begin{tabular}{l}
         $\beta\ast\beta=\beta$\\
         $\alpha\ast\beta=\beta$\\
         \cite[5.3.42]{Catalogue}\\
         \end{tabular}}
&
         {\scriptsize\setlength\tabcolsep{2pt}\begin{tabular}{c|c|c|c|c}
$\ast$ &$\alpha$ & $\beta$ & $\gamma$ &  $\top$\\
\hline
$\alpha$ & $\alpha$ & $\beta$ & $\alpha$ &$\top$\\
\hline
$\beta$ &$\beta$  & $\beta$ & $\beta$ & $\beta$\\
\hline
$\gamma$  & $\alpha$ & $\beta$ & $\gamma$ & $\top$\\
\hline
$\top$ &$\top$ & $\beta$ & $\top$ & $\top$\\
\end{tabular}}
\end{tabular}}
\vskip5pt
{\setlength\tabcolsep{3pt}\begin{tabular}{llll}
{\scriptsize\setlength\tabcolsep{3pt}\begin{tabular}{l}
         $\beta\ast\beta=\alpha$\\
         \cite[5.3.184]{Catalogue}\\
         \end{tabular}}
&
         {\scriptsize\setlength\tabcolsep{2pt}\begin{tabular}{c|c|c|c|c}
$\ast$ &$\alpha$ & $\beta$ & $\gamma$ &  $\top$\\
\hline
$\alpha$ & $\alpha$ & $\beta$ & $\alpha$ &$\top$\\
\hline
$\beta$ &$\beta$  & $\alpha$ & $\beta$ & $\top$\\
\hline
$\gamma$  & $\alpha$ & $\beta$ & $\gamma$ & $\top$\\
\hline
$\top$ &$\top$ & $\top$ & $\top$ & $\top$\\
\end{tabular}}
&
         {\scriptsize\setlength\tabcolsep{3pt}\begin{tabular}{l}
         $\beta\ast\beta=\top$\\
         \cite[5.3.229]{Catalogue}
         \end{tabular}}
&
         {\scriptsize\setlength\tabcolsep{2pt}\begin{tabular}{c|c|c|c|c}
$\ast$ &$\alpha$ & $\beta$ & $\gamma$ &  $\top$\\
\hline
$\alpha$ & $\alpha$ & $\beta$ & $\alpha$ &$\top$\\
\hline
$\beta$ &$\beta$  & $\top$ & $\beta$ & $\top$\\
\hline
$\gamma$  & $\alpha$ & $\beta$ & $\gamma$ & $\top$\\
\hline
$\top$ &$\top$ & $\top$ & $\top$ & $\top$\\
\end{tabular}}
\end{tabular}}
\vskip-5pt 
                  \caption{${\mathsf{N}}_5$ {\sf Case 1.}  (Unital) $\gamma\ast \gamma=\gamma$. }
          \label{fig:N5.1}
         \vskip-5pt
         \end{figure}
         
     \noindent {\sf Case 2.} (Non-unital) First we show that a non-unital quantale $\alg{Q}$ on $\mathsf{N}_5$ satisfying (\ref{property(a)})  is semi-unital if and only if the following condition holds:
\stepcounter{num}
\begin{equation} \label{n3.3} \gamma\ast \gamma=\alpha\quad \text{and}\quad \beta\ast \gamma=\beta=\gamma\ast \beta.
\end{equation}
Since $\alg{Q}$ is not unital, we conclude from the previous case that $\gamma\ast \gamma\le \alpha$ holds. Now let $\alg{Q}$ be semi-unital.  If we would assume $\gamma\ast \gamma=\bot$, then we would obtain the contradiction $\gamma\le \top\ast \gamma=(\beta\vee \gamma)\ast \gamma=\beta\ast \gamma\le \beta$. Hence $\gamma\ast \gamma=\alpha$ follows. Further, $\gamma\le \top\ast \gamma= (\beta\ast \gamma)\vee (\alpha\ast \gamma)$ and $\gamma\not\le \alpha$ imply $\beta\ast \gamma\neq \bot$. Since $\beta$ is an atom, the relation $\beta\ast \gamma=\beta$ follows. Analogously we verify $\gamma\ast \beta=\beta$.\\
On the other hand, if (\ref{n3.3}) holds, then we first observe $\alpha\ast \alpha=\alpha$. Otherwise,  if we would assume $\alpha\ast \alpha=\bot$, then we would obtain the  contradiction $\bot=\beta\ast \alpha\ast \alpha= \beta\ast \gamma\ast \gamma\ast \gamma\ast \gamma=\beta$.
Therefore it is easily seen that 
\[\alpha=\alpha\ast \alpha\le \top\ast \alpha, \quad\beta=\gamma \ast \beta\le \top\ast \beta\quad\text{and}\quad\gamma\le \alpha\vee \beta=(\gamma\ast \gamma)\vee (\beta\ast \gamma)=\top\ast \gamma.\]
Analogously we verify $\alpha\le \alpha\ast \top,\,\beta\le \beta\ast \top$ and $\gamma \ast \top =\top$.
\vskip3pt

\noindent
{\sf Case 2.1.}(Non-unital and semi-unital) The relations (\ref{n3.3}) and (\ref{property(a)}) imply $\alpha\ast \alpha={\alpha\ast \gamma}=\gamma\ast \alpha=\alpha$ and $\alpha\ast \beta=\beta \ast \alpha=\beta$. Further we obtain 
\[\top\ast \alpha= (\beta\ast \alpha)\vee (\alpha\ast \alpha)=\beta\vee \alpha=\top,\quad\top\ast \beta= (\beta\ast \beta)\vee \beta\quad\text{and}\quad\top\ast \gamma=\top.\]
Analogously the relations $\alpha\ast \top =\top$, $ \beta\ast \top= (\beta\ast \beta)\vee\beta$ and $\gamma\ast \top =\top$ hold. Finally, if we would assume $\beta\ast \beta= \gamma$, then we would obtain the contradiction $\alpha=\gamma\ast \alpha= \beta\ast \beta\ast \alpha= \beta\ast \beta=\gamma$.
Hence $ \beta\ast \beta\in \set{\bot,\beta,\alpha,\top}$, and so we have  $4$ non-isomorphic, non-unital, but semi-unital  quantales on $\mathsf{N}_5$ satisfying (\ref{property(a)}) %, namely \cite[5.3.16, 5.3.41, 5.3.183, 5.3.228]{Catalogue} 
(see Figure~\ref{fig:N5.2}).
\begin{figure}[H] 
         \vskip-5pt
\centering
{\setlength\tabcolsep{1pt}\begin{tabular}{llllllll}
        {\scriptsize
         \setlength\tabcolsep{3pt}
         \begin{tabular}{l}
         $\beta\ast\beta=\bot$\\
         \cite[5.3.16]{Catalogue}\\
         \end{tabular}}
 &
         {\tiny\setlength\tabcolsep{2pt}\begin{tabular}{c|c|c|c|c}
$\ast$ &$\alpha$ & $\beta$ & $\gamma$ &  $\top$\\
\hline
$\alpha$ & $\alpha$ & $\beta$ & $\alpha$ &$\top$\\
\hline
$\beta$ &$\beta$  & $\bot$ & $\beta$ & $\beta$\\
\hline
$\gamma$  & $\alpha$ & $\beta$ & $\alpha$ & $\top$\\
\hline
$\top$ &$\top$ & $\beta$ & $\top$ & $\top$\\
\end{tabular}}
 &
         {\scriptsize\setlength\tabcolsep{3pt}
         \begin{tabular}{l}
         $\beta\ast\beta=\alpha$\\
         \cite[5.3.183]{Catalogue}\\
         \end{tabular}}
 &
         {\tiny\setlength\tabcolsep{2pt}\begin{tabular}{c|c|c|c|c}
$\ast$ &$\alpha$ & $\beta$ & $\gamma$ &  $\top$\\
\hline
$\alpha$ & $\alpha$ & $\beta$ & $\alpha$ &$\top$\\
\hline
$\beta$ &$\beta$  & $\alpha$ & $\beta$ & $\top$\\
\hline
$\gamma$  & $\alpha$ & $\beta$ & $\alpha$ & $\top$\\
\hline
$\top$ &$\top$ & $\top$ & $\top$ & $\top$\\
\end{tabular}}
 &
         {\scriptsize\setlength\tabcolsep{3pt}
         \begin{tabular}{l}
         $\beta\ast\beta=\beta$\\
         \cite[5.3.41]{Catalogue}\\
         \end{tabular}}
 &
          {\tiny\setlength\tabcolsep{2pt}\begin{tabular}{c|c|c|c|c}
$\ast$ &$\alpha$ & $\beta$ & $\gamma$ &  $\top$\\
\hline
$\alpha$ & $\alpha$ & $\beta$ & $\alpha$ &$\top$\\
\hline
$\beta$ &$\beta$  & $\beta$ & $\beta$ & $\beta$\\
\hline
$\gamma$  & $\alpha$ & $\beta$ & $\alpha$ & $\top$\\
\hline
$\top$ &$\top$ & $\beta$ & $\top$ & $\top$\\
\end{tabular}}
 &
         {\scriptsize\setlength\tabcolsep{3pt}\begin{tabular}{l}
         $\beta\ast\beta=\top$\\
         \cite[5.3.228]{Catalogue}\\
         \end{tabular}}
 &
         {\tiny\setlength\tabcolsep{2pt}\begin{tabular}{c|c|c|c|c}
$\ast$ &$\alpha$ & $\beta$ & $\gamma$ &  $\top$\\
\hline
$\alpha$ & $\alpha$ & $\beta$ & $\alpha$ &$\top$\\
\hline
$\beta$ &$\beta$  & $\top$ & $\beta$ & $\top$\\
\hline
$\gamma$  & $\alpha$ & $\beta$ & $\alpha$ & $\top$\\
\hline
$\top$ &$\top$ & $\top$ & $\top$ & $\top$\\
\end{tabular}}
\end{tabular}}
%\vskip-5pt 
                      \caption{${\mathsf{N}}_5$ {\sf Case 2.1} (Non-unital, but semi-unital quantales) $\gamma\ast \gamma=\alpha$.  $\beta\ast \gamma=\beta=\gamma\ast \beta$.  }
          \label{fig:N5.2}
         \vskip-5pt
         \end{figure}
\pagebreak

\noindent{\sf Case 2.2.} (Non semi-unital quantales) With regard to (\ref{n3.3}) we distinguish two cases:
\vskip3pt

\noindent
{\sf Case 2.2.1} $\gamma\ast \gamma=\alpha$ and $(\beta\ast \gamma)\wedge(\gamma \ast \beta)=\bot$.  Depending on the possible values of $\beta\ast \gamma,\gamma \ast \beta\in\set{\bot,\beta}$ we obtain $4$ non-isomorphic quantales  on ${\mathsf{N}}_5$,  the last $2$ of them being non-commutative (see Figure~\ref{fig:N5.3.1}). \\
\noindent If  $\beta\ast \gamma=\bot=\gamma \ast \beta$, then $\beta\ast \alpha=\bot=\alpha\ast \beta$. Further, $\gamma\ast \gamma=\alpha$ implies $\alpha\ast\gamma\neq\bot$ and $\gamma\ast \alpha\neq \bot$. Hence $\alpha\ast \gamma=\gamma\ast \alpha=\alpha=\alpha\ast \alpha$ follows. Since $\gamma\ast \gamma=\alpha$ and $\gamma\ast \alpha=\alpha$, we conclude $\beta\ast \beta\in \set{\bot,\beta}$, and so we obtain the first $2$ quantales in Figure~\ref{fig:N5.3.1}. If $\beta\ast \gamma=\beta$ and $\gamma\ast \beta=\bot$ then $\beta\ast \alpha=\beta$ and $\alpha\ast \beta=\bot$. In particular $\beta\ast \beta=\bot$ follows, and consequently we have $\top\ast \beta=\bot$. Further, if we would assume $\gamma\ast \alpha= \bot$, then we would obtain the  contradiction $\alpha=\gamma\ast \gamma\le \gamma\ast (\beta\vee \alpha)=\bot$. Hence $\gamma\ast \alpha=\alpha$ follows, and $\gamma\ast \gamma=\alpha$ implies $\alpha\ast \alpha=\alpha$, and consequently $\alpha\ast \gamma=\alpha$. Analoguously we can treat the case $\beta\ast \gamma=\bot$ and $\gamma\ast \beta=\beta$ and arrive at the opposite quantale. In this way we obtain the last $2$ (non-commutative) quantales  in Figure~\ref{fig:N5.3.1}.
\begin{figure}[H] 
         \vskip-5pt
\centering
{\setlength\tabcolsep{1pt}\begin{tabular}{llllllll}
        {\scriptsize\setlength\tabcolsep{3pt}
 \begin{tabular}{l}
         $\beta\ast \gamma=\bot$\\
         $\gamma\ast \beta=\bot$\\
         \cite[5.3.3]{Catalogue}
         \end{tabular}}
&
        {\tiny\setlength\tabcolsep{2pt}\begin{tabular}{c|c|c|c|c}
$\ast$ &$\alpha$ & $\beta$ & $\gamma$ &  $\top$\\
\hline
$\alpha$ & $\alpha$ & $\bot$ & $\alpha$ &$\alpha$\\
\hline
$\beta$ &$\bot$  & $\bot$ & $\bot$ & $\bot$\\
\hline
$\gamma$  & $\alpha$ & $\bot$ & $\alpha$ & $\alpha$\\
\hline
$\top$ &$\alpha$ & $\bot$ & $\alpha$ & $\alpha$\\
\end{tabular}}
&
         {\scriptsize\setlength\tabcolsep{3pt}\begin{tabular}{l}
         $\beta\ast \gamma=\bot$\\
         $\gamma\ast \beta=\bot$\\
         \cite[5.3.29]{Catalogue}
         \end{tabular}}
&
        {\tiny\setlength\tabcolsep{2pt}\begin{tabular}{c|c|c|c|c}
$\ast$ &$\alpha$ & $\beta$ & $\gamma$ &  $\top$\\
\hline
$\alpha$ & $\alpha$ & $\bot$ & $\alpha$ &$\alpha$\\
\hline
$\beta$ &$\bot$  & $\beta$ & $\bot$ & $\beta$\\
\hline
$\gamma$  & $\alpha$ & $\bot$ & $\alpha$ & $\alpha$\\
\hline
$\top$ &$\alpha$ & $\beta$ & $\alpha$ & $\top$\\
\end{tabular}}
&
         {\scriptsize\setlength\tabcolsep{3pt}\begin{tabular}{l}
         $\beta\ast \gamma=\beta$\\
         $\gamma\ast \beta=\bot$\\
         \cite[5.3.11]{Catalogue}
         \end{tabular}}
&
       {\tiny\setlength\tabcolsep{2pt}\begin{tabular}{c|c|c|c|c}
$\ast$ &$\alpha$ & $\beta$ & $\gamma$ &  $\top$\\
\hline
$\alpha$ & $\alpha$ & $\beta$ & $\alpha$ &$\top$\\
\hline
$\beta$ &$\bot$  & $\bot$ & $\bot$ & $\bot$\\
\hline
$\gamma$  & $\alpha$ & $\beta$ & $\alpha$ & $\top$\\
\hline
$\top$ &$\alpha$ & $\beta$ & $\alpha$ & $\top$\\
\end{tabular}}
&
         {\scriptsize\setlength\tabcolsep{3pt}
         \begin{tabular}{l}
         $\beta\ast \gamma=\bot$\\
         $\gamma\ast \beta=\beta$\\
         \cite[5.3.6]{Catalogue}
         \end{tabular}}
&
        {\tiny\setlength\tabcolsep{2pt}\begin{tabular}{c|c|c|c|c}
$\ast$ &$\alpha$ & $\beta$ & $\gamma$ &  $\top$\\
\hline
$\alpha$ & $\alpha$ & $\bot$ & $\alpha$ &$\alpha$\\
\hline
$\beta$ &$\beta$  & $\bot$ & $\beta$ & $\beta$\\
\hline
$\gamma$  & $\alpha$ & $\bot$ & $\alpha$ & $\alpha$\\
\hline
$\top$ &$\top$ & $\bot$ & $\top$ & $\top$\\
\end{tabular}}
\end{tabular}}
\vskip-5pt 
                 \caption{${\mathsf{N}}_5$ {\sf Case 2.2.1}  (Non semi-unital quantales)  $\gamma\ast \gamma=\alpha$.\\ $(\beta\ast \gamma)\wedge(\gamma \ast \beta)=\bot$.}
          \label{fig:N5.3.1}
         \vskip-5pt
         \end{figure}

    \noindent{\sf Case 2.2.2} $\gamma\ast \gamma=\bot$.  Then $\alpha\ast\alpha=\bot$ and $\gamma\ast \beta=\beta\ast \gamma=\bot$, and consequently 
    \[\alpha\ast \beta=\beta \ast \alpha=\alpha\ast \gamma=\gamma\ast \alpha=\bot.\]
     Hence $\beta\ast \beta=\beta\ast \top=\top\ast \beta=\top\ast \top$ and $\beta\ast \beta$ can attain all elements of $\mathsf{N}_5$. 
Therefore there exist $5$ non-isomorphic and non-unital quantales on ${\mathsf{N}}_5$ satisfying (\ref{property(a)}) %(cf. \cite[5.3.1, 5.3.28, 5.3.178, 5.3.189, 5.3.207]{Catalogue}).
 (see Figure~\ref{fig:N5.3.2}). 
Among them there is the trivial quantale on ${\mathsf{N}}_5$ (cf.\ Example~\ref{examples1}\,(1)).
\begin{figure}[H] 
         \vskip-5pt
\centering
{\setlength\tabcolsep{1pt}\begin{tabular}{llllll}
        {\scriptsize\setlength\tabcolsep{3pt}
         \begin{tabular}{l}
         $\beta\ast\beta=\bot$\\
         \cite[5.3.1]{Catalogue}\\
         \end{tabular}}
&
         {\scriptsize\setlength\tabcolsep{2pt}\begin{tabular}{c|c|c|c|c}
$\ast$ &$\alpha$ & $\beta$ & $\gamma$ &  $\top$\\
\hline
$\alpha$ & $\bot$ & $\bot$ & $\bot$ &$\bot$\\
\hline
$\beta$ &$\bot$  & $\bot$ & $\bot$ & $\bot$\\
\hline
$\gamma$  & $\bot$ & $\bot$ & $\bot$ & $\bot$\\
\hline
$\top$ &$\bot$ & $\bot$ & $\bot$ & $\bot$\\
\end{tabular}} 
&
         {\scriptsize\setlength\tabcolsep{3pt}\begin{tabular}{l}
         $\beta\ast\beta=\alpha$\\
         \cite[5.3.178]{Catalogue}\\
         \end{tabular}}
&
         {\scriptsize\setlength\tabcolsep{2pt}\begin{tabular}{c|c|c|c|c}
$\ast$ &$\alpha$ & $\beta$ & $\gamma$ &  $\top$\\
\hline
$\alpha$ & $\bot$ & $\bot$ & $\bot$ &$\bot$\\
\hline
$\beta$ &$\bot$  & $\alpha$ & $\bot$ & $\alpha$\\
\hline
$\gamma$  & $\bot$ & $\bot$ & $\bot$ & $\bot$\\
\hline
$\top$ &$\bot$ & $\alpha$ & $\bot$ & $\alpha$\\
\end{tabular}}
&
         {\scriptsize\setlength\tabcolsep{3pt}\begin{tabular}{l}
         $\beta\ast\beta=\beta$\\
         \cite[5.3.28]{Catalogue}\\
          \end{tabular}}
&
         {\scriptsize\setlength\tabcolsep{2pt}\begin{tabular}{c|c|c|c|c}
$\ast$ &$\alpha$ & $\beta$ & $\gamma$ &  $\top$\\
\hline
$\alpha$ & $\bot$ & $\bot$ & $\bot$ &$\bot$\\
\hline
$\beta$ &$\bot$  & $\beta$ & $\bot$ & $\beta$\\
\hline
$\gamma$  & $\bot$ & $\bot$ & $\bot$ & $\bot$\\
\hline
$\top$ &$\bot$ & $\beta$ & $\bot$ & $\beta$\\
\end{tabular}}
 \end{tabular}}
\vskip5pt
{\setlength\tabcolsep{1pt}\begin{tabular}{llllll}         
{\scriptsize\setlength\tabcolsep{3pt}
         \begin{tabular}{l}
         $\beta\ast\beta=\gamma$\\
         \cite[5.3.189]{Catalogue}\\
         \end{tabular}}
&
         {\scriptsize\setlength\tabcolsep{2pt}\begin{tabular}{c|c|c|c|c}
$\ast$ &$\alpha$ & $\beta$ & $\gamma$ &  $\top$\\
\hline
$\alpha$ & $\bot$ & $\bot$ & $\bot$ &$\bot$\\
\hline
$\beta$ &$\bot$  & $\gamma$ & $\bot$ & $\gamma$\\
\hline
$\gamma$  & $\bot$ & $\bot$ & $\bot$ & $\bot$\\
\hline
$\top$ &$\bot$ & $\gamma$ & $\bot$ & $\gamma$\\
\end{tabular}}
&
         {\scriptsize\setlength\tabcolsep{3pt}
         \begin{tabular}{l}
         $\beta\ast\beta=\top$\\
         \cite[5.3.207]{Catalogue}\\
         \end{tabular}}
         {\scriptsize\setlength\tabcolsep{2pt}\begin{tabular}{c|c|c|c|c}
$\ast$ &$\alpha$ & $\beta$ & $\gamma$ &  $\top$\\
\hline
$\alpha$ & $\bot$ & $\bot$ & $\bot$ &$\bot$\\
\hline
$\beta$ &$\bot$  & $\top$ & $\bot$ & $\top$\\
\hline
$\gamma$  & $\bot$ & $\bot$ & $\bot$ & $\bot$\\
\hline
$\top$ &$\bot$ & $\top$ & $\bot$ & $\top$\\
\end{tabular}}
\end{tabular}}
\vskip-5pt 
         \caption{${\mathsf{N}}_5$ {\sf Case 2.2.2} (Non semi-unital) $\gamma\ast \gamma=\bot$. }
\label{fig:N5.3.2}
         \vskip-5pt
         \end{figure}

Finally, we can summarize the situation as follows. There exist $9$ non-isomorphic and not semi-unital quantales on the pentagon satisfying (\ref{property(a)}). Among them there exist exactly two non-commutative quantales, namely \cite[5.3.6,\, 5.3.11]{Catalogue}.
\end{example}

As a concluding remark of the Examples \ref{example2} and \ref{example3}  we point out (cf.\ Corollary~\ref{newcor1}, Proposition~\ref{newprop3} and Proposition~\ref{newnewprop6}) that on the set of $7$ elements there exist exactly $30$ non-isomorphic, unitally  nondistributive quantales. Among them there are $2$ non-commutative ones.
\pagebreak

\subsection{Unitally nondistributive quantales on a complete lattice with at least $8$ elements}
\label{subsec:3.2}
In a third step we consider the case $ \gamma \neq \alpha\vee \gamma\neq a$ and $\beta \vee \gamma =a$. This situation implies $\alpha\not\le \gamma$ and $\beta\not\le \gamma$, and consequently $\gamma$ is a third atom.   Then the lattice-structure of $S$ is characterized by the  Hasse diagram:
 \stepcounter{num}
\begin{equation}\label{L8.1}
\begin{tikzpicture}[x=7mm,y=7mm,baseline={([yshift=-.5ex]current bounding box.center)},vertex/.style={anchor=base}]
         \draw (1,0)--(2,1)--(1,2.25)--(1,3);
         \draw (1,0)--(0,0.75)--(0,1.5)--(1,2.25);
         \draw (1,0)--(1,0.75);
         \draw (1,0.75)--(0,1.5);
         \draw (1,0.75)--(2,2.25)--(1,3);
         \draw[white,line width=4pt] (2,1)--(1,2.25);
         \draw (2,1.)--(1,2.25);
         \mycircle{2,2.25};
         \node[right] at (2,2.25) {$e$};
         \mycircle{1,0};
         \node[below] at (1,0) {$\bot$};
         \mycircle{0,0.75};
         \node[left] at (0,0.75) {$\alpha$};
         \mycircle{1,0.75};
         \node[above] at (1,0.75) {$\gamma$};
         \mycircle{2,1};
         \node[right] at (2,1) {$\beta$};
         \mycircle{0,1.5};
         \node[left] at (0,1.5) {$\alpha \vee \gamma$};
         \mycircle{1,2.25};
         \node[left] at (1,2.25) {$a$};
         \mycircle{1,3};
         \node[above] at (1,3) {$e \vee a$}; 
         \node[left] at (0,0) {$S$}; 
         \end{tikzpicture}
         \qquad
\begin{tikzpicture}[x=7mm,y=7mm,baseline={([yshift=-.5ex]current bounding box.center)},vertex/.style={anchor=base}]
         \draw (1,0)--(2,1)--(1,2.25);
         \draw (1,0)--(0,0.75)--(0,1.5)--(1,2.25);
         \draw (1,0)--(1,0.75);
         \draw (1,0.75)--(0,1.5);
         \draw (2,1.)--(1,2.25);
         \mycircle{1,0};
         \node[below] at (1,0) {$\bot$};
         \mycircle{0,0.75};
         \node[left] at (0,0.75) {$\alpha$};
         \mycircle{1,0.75};
         \node[above] at (1,0.75) {$\gamma$};
         \mycircle{2,1};
         \node[right] at (2,1) {$\beta$};
         \mycircle{0,1.5};
         \node[left] at (0,1.5) {$\alpha \vee \gamma$};
         \mycircle{1,2.25};
         \node[above] at (1,2.25) {$a$};
         \node[above,white] at (1,3) {$e \vee a$}; 
         \node[left] at (0,0) {$L$}; 
         \end{tikzpicture}
         \end{equation}
%\vskip-5pt 
\noindent 
and $L$ has the form $L=\mathsf{L}_6$ (cf. Figure~\ref{fig:L6 and L7}). Again we conclude from Result~\ref{result} that every quantale on $S$ with unit $e$ is an extension of a quantale on $\mathsf{L}_6$ satisfying (\ref{property(a)}) and (\ref{property(aa)}) by an isolated unit. 
However, referring to Remarks~\ref{newremarkCC}\,(3) we observe that the algebraic situation on the nondistributive lattice $\mathsf{L}_6$ is fundamentally different from that on the pentagon or on the diamond with three atoms (see Subsection~\ref{subsec:3.1}), since in $\mathsf{M}_3$ and $\mathsf{N}_5$ the condition (3.2) always implies  (3.3) (cf.\ Remark~\ref{newremarkCC}~(2)).

In the last step we consider the general case, namely $\alpha\vee \gamma\neq a$ and $\beta\vee\gamma\neq a$.  Since $\alpha\vee \beta=a$, this situation implies $\alpha\vee \gamma \neq \gamma$ and $\beta\vee \gamma\neq \gamma$. Then  again $\gamma$ is an atom in $S$ and the lattice-theoretic structure of $S$ is characterized by the Hasse diagram:
 \stepcounter{num}
\begin{equation}\label{L9}
\begin{tikzpicture}[x=7mm,y=7mm,baseline={([yshift=-.5ex]current bounding box.center)},vertex/.style={anchor=base}]
         \draw (1,0)--(2,0.75)--(2,1.5)--(1,2.25)--(1,3);
         \draw (1,0)--(0,0.75)--(0,1.5)--(1,2.25);
         \draw (1,0)--(1,0.75)--(0,1.5);
         \draw (1,0.75)--(2,1.5);
         \draw (1,0.75)--(2,2.25)--(1,3);
         \draw[white,line width=4pt] (2,1.5)--(1,2.25);
         \draw (2,1.5)--(1,2.25);
         \mycircle{2,2.25};
         \node[right] at (2,2.25) {$e$};
         \mycircle{1,0};
         \node[below] at (1,0) {$\bot$};
         \mycircle{0,0.75};
         \node[left] at (0,0.75) {$\alpha$};
         \mycircle{1,0.75};
         \node[above] at (1,0.75) {$\gamma$};
         \mycircle{2,0.75};
         \node[right] at (2,0.75) {$\beta$};
         \mycircle{0,1.5};
         \node[left] at (0,1.5) {$\alpha \vee \gamma$};
         \mycircle{2,1.5};
         \node[right] at (2,1.5) {$\beta \vee \gamma$};
         \mycircle{1,2.25};
         \node[left] at (1,2.25) {$a$};
         \mycircle{1,3};
         \node[above] at (1,3) {$a\vee e$}; 
         \node[left] at (0,0) {$S$}; 
         \end{tikzpicture}
         \qquad\begin{tikzpicture}[x=7mm,y=7mm,baseline={([yshift=-.5ex]current bounding box.center)},vertex/.style={anchor=base}]
         \draw (1,0)--(2,0.75)--(2,1.5)--(1,2.25);
         \draw (1,0)--(0,0.75)--(0,1.5)--(1,2.25);
         \draw (1,0)--(1,0.75)--(0,1.5);
        \draw (1,0.75)--(2,1.5)--(1,2.25);
         \mycircle{1,0};
         \node[below] at (1,0) {$\bot$};
         \mycircle{0,0.75};
         \node[left] at (0,0.75) {$\alpha$};
         \mycircle{1,0.75};
         \node[above] at (1,0.75) {$\gamma$};
         \mycircle{2,0.75};
         \node[right] at (2,0.75) {$\beta$};
         \mycircle{0,1.5};
         \node[left] at (0,1.5) {$\alpha \vee \gamma$};
         \mycircle{2,1.5};
         \node[right] at (2,1.5) {$\beta \vee \gamma$};
         \mycircle{1,2.25};
         \node[above] at (1,2.25) {$a$};
         \node[above,white] at (1,3) {$a\vee e$}; 
         \node[left] at (0,0) {$L$}; 
         \end{tikzpicture}
         \end{equation}
%\vskip-5pt 
\noindent 
and $L$ has the form $L=\mathsf{L}_7$ (cf. Figure~\ref{fig:L6 and L7}). Referring to Result~\ref{result} every unital quantale on $S$ with unit $e$ is an extension of a quantale on  $\mathsf{L}_7$   satisfying (\ref{property(a)}) and (\ref{property(aa)}) by an isolated unit.
With regard to further investigations we include the following remark.

\begin{remark}\label{newremark2} Let $\alg{Q}$ be a quantale on $\mathsf{L}_7$. If $\alg{Q}$ satisfies (\ref{property(a)}) and  (\ref{property(aa)}), then $a\ast \gamma=\gamma\ast a= \bot$. In fact, since ${(\alpha\vee \gamma)\wedge (\beta \vee \gamma)}=\gamma$, Lemma~\ref{newlem4}\,\ref{newlem4(4)} and Theorem~\ref{newthm2} imply that $\gamma$ is two-sided in $\overline{\mathsf{L}}_7$, and consequently $a\ast \gamma\le \gamma$ holds. Hence the relation
\[a\ast \gamma=(\alpha\ast \gamma)\vee (\beta\ast \gamma)\le (\alpha\wedge\gamma)\vee (\beta\wedge \gamma)=\bot\]
follows. Analogously we can verify $\gamma \ast a=\bot$.\\
Hence there exists a relationship between unitally nondistributive quantales on $\overline{\mathsf{L}}_7$ and $\overline{\mathsf{M}}_3$ as follows: 
Every unitally nondistribitive quantale on $\overline{\mathsf{M}}_3$ being an extension of the non-unital quantales \cite[5.2.1 --- 5.2.4]{Catalogue} by an isolated unit $e$ is a quotient of a unital quantale on $\overline{\mathsf{L}}_7$ with unit $e$ in the sense of the category of unital quantales. 

We finish this remark with some details of this construction. Since $\top\ast \gamma=\bot=\gamma \ast \top$ holds in all non-unital quantales on the diamond $\mathsf{M}_3$ (cf.\ Example~\ref{example2}), the quantale multiplications in \cite[5.2.1 --- 5.2.4]{Catalogue} have an extension to a quantale multiplication in $\mathsf{L}_7$ by defining:
\[x\ast (\alpha\vee \gamma):=x\ast \alpha,\quad x\ast (\beta \vee \gamma):=x\ast \beta,\quad (\alpha\vee \gamma)\ast x:=\alpha\ast x,\quad (\beta\vee \gamma)\ast x:=\beta\ast x,\]
for each $x\in \mathsf{M}_3$.
In a second step we realize that all these quantales satisfy the properties (\ref{property(a)}) and (\ref{property(aa)}). Hence we can apply Theorem~\ref{newthm2} and consider their extension by an isolated unit on $\overline{\mathsf{L}}_7$. Finally, $\overline{\mathsf{L}}_7\xrightarrow{\,c\,} \overline{\mathsf{L}}_7$ defined by (cf.\ Figure~\ref{fig:L6 and L7}):
\[c(\alpha\vee \gamma)=\top=c(\beta\vee \gamma),\quad c(z)=z\quad \text{for}\,\,\, z\in \overline{\mathsf{L}}_7\setminus \set{\alpha\vee \gamma,\beta\vee \gamma}\]
is a nucleus, and the respective quotients w.r.t.\ $c$ are isomorphic to the extension of the respective non-unital quantales on $\mathsf{M}_3$ by an isolated unit.
\end{remark}

\noindent
{\bf Acknowledgment.}
The argument in Comments~\ref{newnewcomment2}\,(b) was pointed out to us by the referee. We greatly appreciate this comment.

\end{document}